\newtheorem{thm}{Theorem}[section]
\newtheorem{cor}[thm]{Corollary}
\newtheorem{prop}[thm]{Proposition}
\newtheorem{lem}[thm]{Lemma}
\theoremstyle{remark}
\newtheorem{rem}[thm]{Remark}
\theoremstyle{definition}
\numberwithin{equation}{section}
\def\RR{\mathbb{R}}
\def\CC{\mathbb{C}}
\def\ZZ{\mathbb{Z}}
\def\NN{\mathbb{N}}
\def\TT{\mathbb{T}}
\DeclareMathOperator{\WB}{WB}
\DeclareMathOperator{\DFT}{DFT}
\DeclareMathOperator{\IDFT}{IDFT}
\def\IM{\mathrm{Im}}
\def\im{\mathrm{i}}
\DeclareMathOperator{\supp}{\mathrm{supp}}
\def\Id{\mathrm{Id}}
\title{Computer assisted proofs for hyperbolic quasi-periodic invariant tori in dissipative twist maps}
\author{
    Victor Linroth
        \thanks
        {
            Department of Mathematics, Uppsala University, Box 480, 751 06 Uppsala,  (Sweden). {\tt victor.linroth@math.uu.se}.
        }
}
\begin{document}

\maketitle

\begin{abstract}
    This paper outlines an approach for proving existence of hyperbolic quasi-periodic invariant tori using computer assisted methods based on an a posteriori KAM-like theorem. We implement it for the dissipative standard map following the approach and give computer assisted proofs of existence of invariant circles for some parameter choices.
  \end{abstract}

\section{Introduction}

The problem of rigorously proving existence of invariant tori has received a lot of attention in the field of \textit{KAM theory}, named after A.N. Kolmogorov, V.I. Arnold and J.K. Moser for their work showing the persistence under perturbation of certain invariant tori in integrable Hamiltonian systems. A key idea to make this work was to only consider invariant tori where the inner dynamics fulfill a form of non-resonance condition. Since the original work of these authors, their methods have been greatly expanded upon and transferred to other settings. Here we find the so called \textit{a posteriori KAM theory} that removes the need for the underlying systems to be on the form of a small perturbation of an integrable system and replaces this with the existence of a tori that is close to invariant, see e.g. \cite{Llave_99, Llave_Gonzalez_Jorba_05,Calleja_Celletti_Llave_22} or the book \cite{Haro_16}. This has greatly increased the scope of invariant tori that can be examined but the estimates needed for proving existence quickly become too complicated to do by hand and the assistance of computers has been introduced to overcome this complexity. Here forms of validated numerics, such as interval analysis and the like, has seen a lot of use. Some examples can be found in \cite{Figueras_Haro_Luque_16,Figueras_Haro_Luque_20,Celletti_Chierchia_07,Valvo_Locatelli_22,Caracciolo_Locatelli_20,Caracciolo_Locatelli_21,Figueras_Haro_12,Capinski_Simo_12}.

The core of the methods used in KAM theory is the idea that given an approximate solution to a certain type of functional equation, one can try to make it better by adding a correction that is obtained by solving a linearized equation describing the true difference between the approximate and the exact solution. This approach closely resembles the Newton method for finding zeros of differentiable functions and can indeed be seen as a generalization to an infinite dimensional setting. Like the Newton method, the methods of KAM theory can both be used to prove existence of solutions and as a way of obtaining highly accurate approximations when implemented as a numerical scheme. Much work has been done in this area, in particular under scope of parameterization methods.

The original setting of Hamiltonian dynamics gives systems described by symplectic maps but KAM theory has also been expanded to cover other types of maps, like e.g. dissipative maps which is the setting of this paper. Here the persistence of the invariant tori can not be asserted for any sufficiently regular perturbation but one must also include sufficiently many tuning parameters for the system that can be adapted to preserve the tori. For more details on when invariant tori persist see \cite{Figueras_Haro_Luque_20}. The nature of dissipative systems makes it likely that they will contain invariant objects that are attractors and they can be very convenient from a numerical perspective. If an attractor is also a normally hyperbolic manifold it means that points in it's basin of attraction will be mapped closer and closer to the manifold at an exponential rate. For attracting normally hyperbolic tori with transitive inner dynamics this means a single orbit can approximate the tori at very high accuracy for the very low cost of simply iterating the map.

The problem then becomes how to make the rigorous estimates needed using the points on the orbit and a very useful method for doing this was the main topic of a series of papers \cite{Das_16, Das_17, Das_Yorke_18, Saiki_Yorke_18}. The method is referred to as the \textit{weighted Birkhoff method} and consists of calculating the sum from the Birkoff Ergodic theorem with weights sampled from a bump function. It can be shown that regardless of the shape of the bump function, as long as it and the underlying system are smooth, the weighted sum has super-polynomial convergence if the inner dynamics of the invariant tori fulfills the same non-resonance condition needed for KAM theory. This condition also ensures ergodicity meaning the sum converges to the corresponding integral of a chosen function. Different choices of this function gives integrals describing various useful dynamical properties of the tori, e.g. rotation vectors, Fourier coefficients and Lyapunov exponents. In particular we note that this allows us to go from the orbit approximation to an approximation given by Fourier coefficients which opens up the possibility of using the parameterization methods from a posteriori KAM theory for rigorously proving existence of a true invariant torus.

Here it is appropriate to stop and question if it is worth the effort to formulate and implement such a method for computing approximations of attracting tori. After all there are already parameterization methods based on Newton iteration that provide very numerically efficient ways of accurately approximating even more general invariant tori, not just attracting. One answer here is that while these parameterization methods are indeed generally very efficient they are still implicit methods requiring equation solving at each step while the weighted Birkhoff method is a fully explicit method and as such it is reasonable to believe there are situations where it will outperform implicit methods. This will likely depend a lot on the dimension and regularity of the torus. Furthermore it should be noted that Newton iteration requires a good initial guess in order to work and for complicated tori this is not always easy to obtain, whereas the weighted Birkhoff method only requires a single point in the basin of attraction. This is could be of significance for the study of tori deforming into non-chaotic strange attractors, although often it is very viable to begin with parameter values giving a very regular attractor and then extend the approximations through continuation to parameters giving more and more complicated attractors. Lastly it should also be noted that as of writing there appears to be no readily available software packages for the parameterization method so implementation falls upon the user and here the weighted Birkhoff method should be the easier to implement.

The goal of this paper is to give a general indication for how to first obtain an approximation of an attracting tori using the weighted Birkhoff method, and then how to use this approximation together with a result from a posteriori KAM theory for dissipative dynamical systems in order to prove the existence of a true invariant tori. This is illustrated by implementing the method for the specific case of the dissipative standard map and obtaining the necessary estimates to prove existence of an invariant tori for some example of parameters. Quasiperiodic attractors of the dissipative standard map have been studied before, see e.g. \cite{Calleja_Celletti_10,Calleja_Figueras_12}. Proving the existence of quasi-periodic attractors using a posteriori KAM theory for conformally symplectic maps has been explored in \cite{Calleja_Celletti_Llave_20}, but as it is noted in the article this is done in a non-rigorous way and the authors suggest implementing the verification using interval arithmetic for rigorous results.

Section \ref{sec:setting} gives the necessary background for formulating the theorem giving conditions guaranteeing existence of invariant tori. Section \ref{sec:AnEfPF} presents a collection of numerical methods for working with periodic functions such as parameterizations of tori. This includes the weighted Birkhoff method for approximating Fourier coefficients as well as a handful of rigorous methods based of the discrete Fourier transform that are necessary for obtaining validated estimates of the parameterization. Section \ref{sec:validation} takes the specific case of the dissipative standard map and goes through all the steps of setting up an algorithm for proving the existence of an attracting torus for a given set of parameters and an initial point for the approximation. In Section \ref{sec:results} we look at the output of an implementation of the algorithm for some example parameters.

\section{The Setting and the KAM-like Theorem} \label{sec:setting}

This section aims to give a complete statement of the KAM-like theorem from \cite{Canadell_Haro_2017}. For ease of reference we try to use the same notation and all the definitions should be the same. We begin with presenting the functional environment the theorem is formulated in and then give definitions for quasi-periodic invariant tori, normal hyperbolicity and Diophantine vectors which are fundamental to the theorem.

\subsection{Spaces of Analytic Functions and Maps On the Annulus}

We begin by specifying the functional environment we need to formulate the rest of the definitions and theorems. Denote a \textit{complex strip} of $\TT_{\rho}^d$ of width $\rho > 0$ by
\[
\TT_{\rho}^d = \{ \theta \in \TT_{\CC}^d \mid | \IM \theta_{\ell} | < \rho, \: \ell = 1,\hdots,d \},
\]
where $\TT_{\CC}^d = \CC^d / \ZZ^d$ is the complex $d$-dimensional torus.

Let $(A_{\rho})^m$ be the Banach space of continuous functions $f: \bar{\TT}_{\rho}^d \to \CC^m$, holomorphic on $\TT_{\rho}^d$ such that $f(\TT_{\rho}^d) \in \RR^m$, with norm
\[
\|f\|_{\rho} = \sup_{\theta \in \TT_{\rho}^d} |f(\theta)|.
\]
where $|v| = \max(|v_1|,\hdots,|v_m|)$, $v\in \CC^m$.

Let $(A_{\rho}^r)^m$ be the Banach space of continuous functions $f: \bar{\TT}_{\rho}^d \to \CC^m$, holomorphic on $\TT_{\rho}^d$ such that $f(\TT_{\rho}^d) \in \RR^m$ and whose partial derivatives can be extended continuously to the boundary of $\TT_{\rho}^d$, with norm
\[
\|f\|_{\rho,C^r} = \max_{\ell=0,\hdots,r} \sup_{\theta \in \TT_{\rho}^d} |D^{\ell} f(\theta)|.
\]
The norm $|\cdot |$ applied to multilinear mappings is the norm induced by the supremum norm on $\CC^m$.

The average of a continuous function $f: \TT^d \to \CC^m$ will be denoted as $\langle f \rangle$, i.e.
\[
\langle f \rangle = \int_{\TT^d} f(\theta) d\theta.
\]

Consider an annulus $\mathcal{A} \subset \TT^d \times \RR^n$, i.e. an open set homotopic to $\TT^d \times \mathcal{V}$, where $\mathcal{V} \subset \RR^n$ is an open set. Then, if $\mathcal{B} \subset \TT_{\CC}^d \times \CC^m$ is a complex neighborhood of $\mathcal{A}$, we denote by $(A_{\mathcal{B}})^m$ the Banach space of continuous functions $f: \bar{\mathcal{B}} \to \CC^m$, holomorphic on $\mathcal{B}$ such that $f(\mathcal{A}) \subseteq \RR^m$ (so $f$ is real analytic), with norm
\[
\|f\|_{\mathcal{B}} = \sup_{z \in \mathcal{B}} |f(z)|.
\]
Let $(A_{\mathcal{B}}^r)^m$ be the Banach space of continuous functions $f: \bar{\mathcal{B}} \to \CC^m$, holomorphic on $\mathcal{B}$ such that $f(\mathcal{A}) \in \RR^m$ and whose partial derivatives can be extended continuously to the boundary of $\mathcal{B}$, with norm
\[
\|f\|_{\mathcal{B},C^r} = \max_{\ell=0,\hdots,r} \sup_{z \in \mathcal{B}} |D^{\ell} f(z)|.
\]
In Theorem \ref{thm:kam} we will consider a $C^2$ family of real-analytic maps $F$ on the annulus (with bounded $C^2$-norms) homotopic to identity, which we will denote
as
\[
F \in \begin{pmatrix} x \\ 0 \end{pmatrix} + C^2(\mathcal{U},(A^2_{\mathcal{B}})^{d+n}),
\]
where $\mathcal{U}$ is an open set of parameters. So for each parameter value $a \in \mathcal{U}$ we get a map $F_a:\mathcal{B} \to \TT^d\times\CC^n$ given by $F_a := F(a)$. We will also consider these maps as functions $F:\mathcal{B}\times\mathcal{U} \to \TT^d\times\CC^n$ defined by $F(z,a) := F_a(z)$ for $z \in \mathcal{B}$, $a \in \mathcal{U}$, and write
\[
F(z,a) = \begin{pmatrix} x \\ 0 \end{pmatrix} + F_p(z,a), \quad z = \begin{pmatrix} x \\ y \end{pmatrix} \in \mathcal{B}, \: a \in \mathcal{U},
\]
where $F_p(\cdot,a) \in (A^2_{\mathcal{B}})^{d+n}$ for all $a \in \mathcal{U}$.

The main object of interest are parameterizations $K:\bar{\TT}_{\rho}^d \to \mathcal{B}$ of real-analytic tori $\mathcal{K} = K(\TT^d)$ (with bounded derivatives) homotopic to the zero-section, i.e.
\[
K(\theta) = \begin{pmatrix} \theta \\ 0 \end{pmatrix} + K_p(\theta) \quad \text{such that} \quad K_p \in (A^1_{\rho})^{d+n},
\]
and we will denote this as
\[
K \in \begin{pmatrix} \theta \\ 0 \end{pmatrix} + (A^1_{\rho})^{d+n}.
\]

\subsection{Quasi-Periodic Invariant Tori and Normal Hyperbolicity}

We say that the torus $\mathcal{K}$ parameterized by $K$ is a \textit{quasi-periodic $F_a$-invariant torus} with frequency $\omega \in \RR^d$ if $\omega$ is irrational, i.e.
\[
 \omega \cdot q - p \neq 0 \quad \text{for all } q \in \ZZ^d\setminus\{0\} \text{ and } p \in \ZZ,
\]
and the couple $(K,a)$ satisfies the equation
\[
F(K(\theta),a) = K(\theta+\omega), \quad \text{for all } \theta \in \TT^d.
\]
Here we can note that if $K$ is a parameterization of a quasi-periodic $F_a$-invariant torus, then so is $K_{\alpha}$ with $K_{\alpha}(\theta) = K(\theta+\alpha)$, $\theta \in \TT^d$, for any $\alpha \in \RR^d$. Because of this non-uniqueness it is appropriate to add an additional condition
\[
\langle K_{\alpha}^x(\theta) - \theta \rangle = 0,
\]
where the superindex $x$ represents the projection on the angle variable.

We say that the torus $\mathcal{K}$ parameterized by $K$ is a (real-analytic) \textit{quasi-periodic normally hyperbolic approximately $F_a$-invariant torus} with frequency $\omega \in \RR^d$ if there exists a normal bundle parameterized by $N^0 \in (A_{\rho}^0)^{(d+n) \times n}$ such that:
\begin{itemize}
\item The frame $P \in (A_{\rho}^0)^{(d+n) \times (d+n)}$, given by $P(\theta) = (DK(\theta),N^0(\theta))$, is invertible for all $\theta \in \TT^d$.
\item The vector bundle map $(H,R_{\omega}): \CC^n \times \bar{\TT}_{\rho}^d \to \CC^n \times \bar{\TT}_{\rho}^d$ given by
\[
(H,R_{\omega})(v,\theta) = (H(\theta)v, \theta+\omega),
\]
where
\[
H(\theta) = \begin{pmatrix} 0 & \Id_n \end{pmatrix} P^{-1}(\theta)D_z F(K(\theta),a) N^0(\theta)
\]
is uniformly hyperbolic, i.e. the induced transfer operator $\mathcal{H}: B(\bar{\TT}_{\rho}^d,\CC^n) \to B(\bar{\TT}_{\rho}^d,\CC^n)$ acting on the Banach space of bounded functions $B(\bar{\TT}_{\rho}^d,\CC^n)$, defined by
\[
\mathcal{H} \vartheta(\theta) = H(\theta-\omega)\vartheta(\theta-\omega)
\]
is hyperbolic, meaning that the operator $\mathcal{H} - \Id$ is invertible in $B(\bar{\TT}_{\rho}^d,\CC^n)$.
\end{itemize}

An equivalent condition to saying that $\mathcal{H}$ is hyperbolic is to say that for every $\eta \in B(\bar{\TT}_{\rho}^d,\CC^n)$ there is a unique solution $\xi \in B(\bar{\TT}_{\rho}^d,\CC^n)$ to the cohomological equation
\[
H(\theta)\xi(\theta) - \xi(\theta + \omega) = \eta(\theta), \quad \theta \in \TT^d,
\]
which we denote as $\xi = \mathcal{R}_H \eta$. This equivalence follows from the fact that $\mathcal{R}_H \eta(\theta) = (\mathcal{H} - \Id)^{-1} \eta(\theta - \omega)$ uniquely solves the above equation. So one practical way of showing that $\mathcal{H}$ is hyperbolic is to find a bound for
\[
\|\mathcal{R}_H\|_{\rho} := \|\mathcal{R}_H\|_{B(\bar{\TT}_{\rho}^d,\CC^n)}.
\]

We say that a (real analytic) quasi-periodic normally hyperbolic approximately $F_a$-invariant torus $\mathcal{K}$, with frequency $\omega \in \RR^d$ is \textit{non-degenerate} if the constant matrix $\langle B^L - T \mathcal{R}_H B^N \rangle$ is invertible, where
\[
B^L(\theta) = \begin{pmatrix} \Id_d & O \end{pmatrix} P(\theta+\omega)^{-1} D_a F(K(\theta),a),
\]
\[
B^N(\theta) = \begin{pmatrix} O & \Id_d \end{pmatrix} P(\theta+\omega)^{-1} D_a F(K(\theta),a),
\]
and
\[
T(\theta) = \begin{pmatrix} \Id_d & O \end{pmatrix} P(\theta+\omega)^{-1} D_z F(K(\theta),a) N^0(\theta).
\]

\subsection{Diophantine Vectors and R\"ussman Estimates} \label{sec:dio}

We formulated quasi-periodicity in terms of irrational rotation vectors $\omega$, but as is standard in KAM-theory we need to narrow down the vectors we consider in order to get useful results. We say a vector $\omega \in \RR^d$ is \textit{Diophantine of type $(\gamma,\tau)$} for $\gamma > 0$ and $\tau \geq d$ if
\[
| \omega \cdot q - p | \geq \frac{\gamma}{|q|^{\tau}}, \quad \text{for all } q \in \ZZ^d\setminus\{0\} \text{ and } p \in \ZZ.
\]
A vector $\omega$ is \textit{Diophantine of class $\tau$} if $\omega$ is Diophantine with constants $(\gamma,\tau)$ for some $\gamma>0$. A vector that is Diophantine of class $\tau$ for some $\tau \geq d$ is also sometimes referred to as simply a \textit{Diophantine} rotation vector.

When computing the constants for Theorem \ref{thm:kam} a value $c_R$ is required and it is determined by the Diophantine rotation vector $\omega$ used. The following lemma describes where this value comes from and we give one method for computing it.

\begin{lem}[R\"ussman Estimates]
Let $\omega$ be a Diophantine number of type $(\gamma,\tau)$, $\gamma > 0$, $\tau \geq d$. Then, there exists a constant $c_R > 0$ such that for any $\hat{\eta} \in A_{\rho}$ there exists a unique zero-average solution $\xi := \mathcal{R}(\hat{\eta})$ of the equation
\[
\xi(\theta) - \xi(\theta + \omega) = \hat{\eta}(\theta) - \langle \hat{\eta} \rangle,
\]
such that, for any $0 < \delta < \rho$, $\xi \in A_{\rho - \delta}$ and satisfies
\[
\|\xi\|_{\rho-\delta} \leq \frac{c_R}{\gamma\delta^{\tau}}\|\hat{\eta}\|_{\rho}.
\]
\end{lem}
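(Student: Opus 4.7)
The plan is to prove the lemma by the classical Fourier series method: solve the cohomological equation mode by mode, control the small denominators using the Diophantine condition, combine with exponential decay of the Fourier coefficients of an analytic function, and sum the resulting series on a slightly shrunken strip.

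First, I expand $\hat{\eta}(\theta) = \sum_{k \in \ZZ^d} \hat{\eta}_k e^{2\pi \im k \cdot \theta}$ and seek $\xi(\theta) = \sum_k \xi_k e^{2\pi \im k \cdot \theta}$. Matching coefficients in the cohomological equation gives
\[
\xi_k \bigl(1 - e^{2\pi \im k \cdot \omega}\bigr) = \hat{\eta}_k \quad \text{for } k \neq 0,
\]
while at $k = 0$ both sides vanish because the mean has been subtracted; the zero-average normalization then forces $\xi_0 = 0$. Since $\omega$ is Diophantine, hence in particular irrational, $1 - e^{2\pi \im k \cdot \omega} \neq 0$ for every $k \neq 0$, so each $\xi_k$ is uniquely determined and uniqueness of $\xi$ follows.

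Next, from the elementary estimate $|1 - e^{2\pi \im \alpha}| = 2|\sin(\pi \alpha)| \geq 4 \operatorname{dist}(\alpha, \ZZ)$ for $\alpha \in \RR$, applied with $\alpha = k \cdot \omega$ and the Diophantine bound, one obtains $|\xi_k| \leq \frac{|k|^\tau}{4\gamma} |\hat{\eta}_k|$. Since $\hat{\eta} \in A_\rho$, the Paley--Wiener--Cauchy estimate yields $|\hat{\eta}_k| \leq \|\hat{\eta}\|_\rho e^{-2\pi |k|_1 \rho}$, where $|k|_1 = |k_1| + \cdots + |k_d|$. Using $|e^{2\pi \im k \cdot \theta}| \leq e^{2\pi |k|_1 (\rho - \delta)}$ for $\theta \in \bar{\TT}_{\rho - \delta}^d$ and summing termwise gives
\[
\|\xi\|_{\rho - \delta} \leq \frac{\|\hat{\eta}\|_\rho}{4\gamma} \sum_{k \in \ZZ^d \setminus \{0\}} |k|^\tau e^{-2\pi |k|_1 \delta}.
\]
Absolute convergence of this series simultaneously establishes $\xi \in A_{\rho - \delta}$ and justifies interchanging summation and supremum.

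The remaining, and main technical, step is to bound the tail sum $S(\delta) := \sum_{k \neq 0} |k|^\tau e^{-2\pi |k|_1 \delta}$ by $c_R \, \delta^{-\tau}$ with an explicit constant $c_R = c_R(\tau, d)$, matching the exponent in the statement. A direct comparison with the radial integral $\int_0^\infty r^{\tau + d - 1} e^{-2\pi r \delta} dr$ only yields the cruder $S(\delta) = O(\delta^{-(\tau + d)})$, and this is where the main obstacle lies. Obtaining the sharper $\delta^{-\tau}$ exponent requires Rüssmann's refinement: split the lattice into dyadic shells $\{k : 2^j \leq |k|_1 < 2^{j+1}\}$ and exploit that $|k|^\tau e^{-2\pi |k|_1 \delta}$ is peaked near $|k|_1 \sim \tau/\delta$, and in some versions pass through an $L^2$ bound on an intermediate strip of width $\rho - \delta/2$ and then return to the sup-norm via a short Cauchy estimate on a margin of width $\delta/2$. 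Tracking every constant through this refinement yields the explicit closed-form value of $c_R$ that the computer-assisted pipeline of Section \ref{sec:validation} requires.
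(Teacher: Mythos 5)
Your setup (mode-by-mode solution, uniqueness from irrationality plus the zero-average normalization, the small-divisor bound $|1-e^{2\pi\im k\cdot\omega}|\geq 4\,\gamma|k|^{-\tau}$, and the Paley--Wiener decay $|\hat{\eta}_k|\leq\|\hat{\eta}\|_{\rho}e^{-2\pi|k|_1\rho}$) is standard and correct, but the proof has a genuine gap exactly at the step you flag as "the main technical step", and the reduction you propose there cannot be repaired. After taking absolute values termwise you are left with bounding $S(\delta)=\sum_{k\neq 0}|k|^{\tau}e^{-2\pi|k|_1\delta}$, and you hope to show $S(\delta)\leq c_R\,\delta^{-\tau}$. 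That inequality is false: for small $\delta$ one has $S(\delta)\asymp\delta^{-(\tau+d)}$ (already for $d=1$ the sum is comparable to $\Gamma(\tau+1)(2\pi\delta)^{-(\tau+1)}$), so no dyadic-shell rearrangement or remark about where the summand peaks can recover the exponent $\tau$; the loss of $d$ powers of $\delta$ is intrinsic to estimating the series termwise by absolute values. The sharp exponent requires using the Diophantine condition more efficiently than through the worst-case bound at every $k$: the classical route (Rüssmann) applies Cauchy--Schwarz to split off $\bigl(\sum_{k\neq0}|\hat{\eta}_k|^2e^{4\pi|k|_1\rho}\bigr)^{1/2}$, which is controlled by $\|\hat{\eta}\|_{\rho}$ via $L^2$ norms on the boundary of the strip (this is the source of the factor $2^{d-3}$), and then bounds $\sum_{k\neq0}e^{-4\pi|k|_1\delta}\,|e^{2\pi\im k\cdot\omega}-1|^{-2}$ by a counting argument showing that only few frequencies in each range can have near-resonant divisors; that counting is what produces the $\Gamma(2\tau+1)$ and $\zeta(2,2^{\tau})$ appearing in the explicit constant $c_R=\sqrt{2^{d-3}\zeta(2,2^{\tau})\Gamma(2\tau+1)}\,(2\pi)^{-\tau}$ used later in the paper. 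Your sketch never carries out either ingredient, so the statement as given (exponent $\delta^{-\tau}$, explicit $c_R$) is not established; what your argument does prove is the weaker estimate with $\delta^{-(\tau+d)}$ and a different constant, which would not feed correctly into the constants of Appendix \ref{sec:const}.

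For context, the paper itself does not prove this lemma but cites \cite[Lemma 4.20]{Haro_16} for the derivation of this $c_R$ (and \cite[Lemma 4.3]{Figueras_Haro_Luque_16} for a sharper variant), so to make your proposal complete you would need to reproduce that Cauchy--Schwarz plus small-divisor-counting argument, or explicitly settle for the weaker exponent and adjust the downstream constants accordingly.
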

Different values of $c_R$ satisfying this lemma have be derived and the one we will use is
\[
c_R = \frac{\sqrt{2^{d-3}\zeta(2,2^{\tau})\Gamma(2\tau+1)}}{(2\pi)^{\tau}},
\]
where $\zeta$ is Hurwitz zeta function and $\Gamma$ is the Gamma function. This formula for $c_R$ is classic and a derivation can be found in \cite[Lemma 4.20]{Haro_16}. Sharper bounds can be required for when choosing $\delta$ very small and \cite[Lemma 4.3]{Figueras_Haro_Luque_16} presents one such bound specifically suited for rigorous numerical evaluation.

\subsection{The KAM-like Theorem}

The following theorem states the precise conditions for when we can say that a quasi-periodic normally hyperbolic approximately $F_a$-invariant torus with a Diophantine rotation vector guarantees the existence of a quasi-periodic $F_{a^{\infty}}$-invariant torus with the same rotation vector for some parameter value $a^{\infty}$ near $a$. It also gives computable bounds on the distance between these tori and parameter values $a$ and $a^{\infty}$, as well as conditions for local uniqueness. This theorem and it's full proof is the topic of \cite{Canadell_Haro_2017}.

\begin{thm} \label{thm:kam}
Let $\mathcal{B} \subset \TT_{\CC}^d \times \CC^n$ be a complex neighborhood of the annulus $\mathcal{A} \subset \TT^d \times \RR^n$. Let
\[
F \in \begin{pmatrix} x \\ 0 \end{pmatrix} + C^2(\mathcal{U},(A^2_{\mathcal{B}})^{d+n})
\]
be a $C^2$-family of real analytic maps $F:\mathcal{B}\times\mathcal{U}\to\TT_{\CC}^d\times\CC^n$, homotopic to the identity, where $\mathcal{U} \subset \RR^d$ is the (open) set of parameters. Let $a\in\mathcal{U} \subset \RR^d$ be a parameter value.
Let $N^0\in(A_{\rho})^{(d+n) \times n}$ be a matrix valued map $N^0:\bar{\TT}_{\rho}^d \to \CC^{(d+n) \times n}$. Let
\[
K \in \begin{pmatrix} \theta \\ 0 \end{pmatrix} + (A^1_{\rho})^{d+n}
\]
be a homotopic to the zero section real-analytic paratmeterization $K: \bar{\TT}_{\rho}^d \to \mathcal{B}$ and assume that $\langle K^x(\theta) - \theta\rangle=0$. Notice that $\mathrm{dist}(K(\bar{\TT}_{\rho}^d),\partial\mathcal{B})>0$.
Let $\omega \in \RR^d$ be a frequency vector, and let $E \in (A_{\rho})^{d+n}$ be the error function $E:\bar{\TT}_{\rho}^d \to \CC^{(d+n)}$ defined by
\[
E(\theta) = F(K(\theta),a)-K(\theta+\omega).
\]
Assume there exists constants:
\begin{itemize}
\item[\textup{\textbf{H1}}] $c_{F,1,z},c_{F,1,a},c_{F,2}$ such that
\[
\|D_z F\|_{\mathcal{B}\times\mathcal{U}} < c_{F,1,z}, \: \|D_a F\|_{\mathcal{B}\times\mathcal{U}}<c_{F,1,a}, \: \|D^2 F\|_{\mathcal{B}\times\mathcal{U}} < c_{F,2},
\]
and $c_N$ such that $\|N^0\|_{\rho} < c_N$;
\item[\textup{\textbf{H2}}] $\sigma_L$ such that $\|DK\|_{\rho}<\sigma_L$;
\item[\textup{\textbf{H3}}] $\sigma_P$ such that $P=(DK \: N^0)$ is invertible with $\|P^{-1}\|_{\rho}<\sigma_P$;
\item[\textup{\textbf{H4}}] $\sigma_H$ such that the map $(H,R_{\omega})$ is hyperbolic with $\|\mathcal{R}_H\|_{\rho}<\sigma_H$.
\item[\textup{\textbf{H5}}] $\sigma_D$ such that the constant matrix $\langle B^L - T\mathcal{R}_H B^N\rangle$ is invertible with\\ ${\|\langle B^L - T\mathcal{R}_H B^N\rangle^{-1}\| < \sigma_D}$.
\item[\textup{\textbf{H6}}] $\gamma,\tau$ such that $| \omega \cdot q - p | \geq \gamma |q|^{-\tau},\,q\in \ZZ^d \setminus \{0\}$ and $p \in \ZZ$.
\end{itemize}
Then, for any $0<\delta<\frac{\rho}{2}$ and $0<\rho_{\infty}<\rho-2\delta$, there exits constants $\hat{C}_{*},\hat{C}_{**}$ and $\hat{C}_{***}$ (depending explicitly on the initial data, the initial analyticity strip $\rho$, the final analyticity strip $\rho_{\infty}$ and $\delta$) such that:
\begin{itemize}
\item[\textup{\textbf{T1}}] (Existence) If the following condition holds
\[
\frac{\hat{C}_{*} \|E\|_{\rho}}{\gamma^2 \rho^{2\tau}} < 1,
\]
then, there exists a couple $(K_{\infty},a_{\infty})$, with $K_{\infty}: \bar{\TT}_{\rho_{\infty}} \to \mathcal{B}$ such that
\[
K_{\infty} \in \left(\begin{array}{c}\theta\\0\end{array}\right) + (A^1_{\rho_{\infty}})^{d+n},
\]
and $a_{\infty} \in \mathcal{U}$, such that
\begin{align*}
F(K_{\infty}(\theta), a_{\infty})-K_{\infty}(\theta+\omega) &= 0, \\
\langle K_{\infty}^x(\theta) - \theta \rangle&= 0.
\end{align*}
That is, $\mathcal{K}_{\infty} = K_{\infty}(\hat{\TT}_{\rho_{\infty}})$ is QP-NHIT for $F_{a_{\infty}}$ with frequency $\omega$. Moreover, $K_{\infty}$ satisfies hypotheses \textup{H2-H5}.
\item[\textup{\textbf{T2}}] (Closeness) The torus $\mathcal{K}_{\infty}$ is close to the original approximation, in the sense that
\[
\|(K_{\infty} - K, a_{\infty} - a)\|_{\rho_{\infty}} \leq \frac{\hat{C}_{**}}{\gamma \rho^{\tau}} \|E\|_{\rho}
\]
\item[\textup{\textbf{T3}}] (Local Uniqueness) If the following conditions holds
\[
\frac{\hat{C}_{***}}{\gamma^2 \rho_{\infty}^{\tau} \rho^{\tau}} \|E\|_{\rho} < 1,
\]
then, if $(K_{\infty}', a_{\infty}')$ satisfies
\begin{align*}
F(K_{\infty}'(\theta), a_{\infty}') - K_{\infty}'(\theta+\omega) &= 0, \\
\langle {K_{\infty}'}^x(\theta) - \theta \rangle &= 0,
\end{align*}
and
\[
\|(K_{\infty}' - K, a_{\infty}' - a)\|_{\rho_{\infty}} <  \hat{C}_{**}\left( \frac{\gamma\rho_{\infty}^{\tau}}{\hat{C}_{***}} - \frac{1}{\gamma\rho^{\tau}} \|E\|_{\rho} \right),
\]
then, $(K_{\infty}', a_{\infty}') = (K_{\infty}, a_{\infty})$.
\end{itemize}

\end{thm}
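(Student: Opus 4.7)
The plan is to prove the theorem by setting up a quasi-Newton (quadratically convergent) iteration on the pair $(K,a)$, in the spirit of the parameterization method for a posteriori KAM theory. Given an approximately invariant couple $(K,a)$ with error $E(\theta) = F(K(\theta),a) - K(\theta+\omega)$, one seeks a correction $(\Delta K, \Delta a)$ such that the updated pair has a new error that is quadratically smaller than $\|E\|_\rho$, up to the usual R\"ussman-type losses in the analyticity width. Iterating such a step with geometrically decreasing strip losses yields a fast-convergent scheme whose smallness threshold is exactly the condition in T1.

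First, I would linearize the invariance equation: the correction must satisfy, to leading order, $D_z F(K(\theta),a)\,\Delta K(\theta) + D_a F(K(\theta),a)\,\Delta a - \Delta K(\theta+\omega) = -E(\theta)$. Writing $\Delta K(\theta) = P(\theta)\xi(\theta)$ in the adapted frame $P = (DK,\,N^0)$, with $\xi = (\xi^L,\xi^N)^T$, and exploiting that $D_z F(K(\theta),a)\,DK(\theta) = DK(\theta+\omega) - DE(\theta)$, I would project onto tangent and normal components via $P(\theta+\omega)^{-1}$. Modulo terms of order $\|E\|$, the system decouples in a triangular way: first a hyperbolic cohomological equation $H(\theta)\xi^N(\theta) - \xi^N(\theta+\omega) = -\eta^N(\theta) - B^N(\theta)\Delta a$, solvable by H4 with $\xi^N = -\mathcal{R}_H(\eta^N + B^N\Delta a)$ and norm loss $\sigma_H$; then a R\"ussman-type equation $\xi^L(\theta) - \xi^L(\theta+\omega) = -\eta^L(\theta) - B^L(\theta)\Delta a + T(\theta)\xi^N(\theta)$ for $\xi^L$, solvable by the lemma of Section \ref{sec:dio} provided its right-hand side has zero average. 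Enforcing zero average yields, up to quadratic terms, $\langle B^L - T\mathcal{R}_H B^N\rangle\,\Delta a = -\langle \eta^L - T\mathcal{R}_H \eta^N\rangle$, which by H5 is invertible with norm bound $\sigma_D$, thereby fixing $\Delta a$. An independent zero-average adjustment of $\xi^L$ restores the normalization $\langle K^x(\theta) - \theta\rangle = 0$ on the new parameterization.

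With the correction chosen this way, a second-order Taylor expansion of $F$ around $(K(\theta),a)$ yields $\|E'\|_{\rho - 2\delta} \leq \hat{C}\,\|E\|_\rho^2 / (\gamma^2\delta^{2\tau})$, the two powers of $c_R/(\gamma\delta^\tau)$ coming from composing the tangent and normal solvers while $c_{F,2}$ supplies the quadratic factor. Iterating with step sizes $\delta_n$ whose sum is at most the buffer $\rho - \rho_\infty - \delta$, and verifying inductively that H1--H5 persist with constants bounded by explicit geometric series, I would conclude that $(K_n,a_n)$ converges in $(A^1_{\rho_\infty})^{d+n}\times\RR^d$ to a limit $(K_\infty,a_\infty)$ precisely when $\hat{C}_*\|E\|_\rho/(\gamma^2\rho^{2\tau}) < 1$, giving T1. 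The closeness bound T2 then follows by summing the telescoping series of corrections, whose dominant first term carries only a single R\"ussman loss $1/(\gamma\rho^\tau)$. For T3, I would apply the same linearization and solvability analysis to the difference of two hypothetical exact solutions; the invertibility of the linearized operator (controlled by the same R\"ussman and hyperbolic bounds as the Newton step) forces the difference to vanish under the size condition on $\|(K_\infty' - K,\, a_\infty' - a)\|_{\rho_\infty}$.

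The main obstacle, as is standard in proofs of this flavor, is the accounting: tracking how each of the constants $c_{F,\ast}$, $\sigma_L$, $\sigma_P$, $\sigma_H$, $\sigma_D$, $c_N$ evolves along the iteration and extracting $\hat{C}_*,\hat{C}_{**},\hat{C}_{***}$ as explicit computable functions of the initial data, $\rho$, $\rho_\infty$ and $\delta$. Two more subtle technical points deserve emphasis. First, verifying that the perturbed frame $P_{\text{new}}$ remains invertible and that the perturbed bundle map stays uniformly hyperbolic requires Neumann-series perturbation arguments around $P$ and around $\mathcal{H} - \Id$, and the smallness needed to run these is precisely what promotes the prefactor in T1 from $\gamma\rho^\tau$ to $\gamma^2\rho^{2\tau}$. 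Second, obtaining a bundle $N^0_\infty$ at the limit (not merely at each finite stage) requires showing that the perturbed frames form a Cauchy sequence in $(A_{\rho_\infty})^{(d+n)\times(d+n)}$, which is handled uniformly through the hyperbolicity estimates together with the geometric decay of successive errors.
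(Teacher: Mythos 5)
Your outline is essentially the standard parameterization-method argument by which this theorem is actually proved in \cite{Canadell_Haro_2017}, which is where the paper takes the statement from (the paper itself gives no proof): adapted-frame decomposition of the linearized invariance equation, the normal solve via $\mathcal{R}_H$ (H4), the tangent R\"ussman solve with $\Delta a$ fixed through the non-degeneracy condition H5 to kill the average, a quadratically small new error, and iteration on shrinking strips with Neumann-series persistence of H2--H5. So the approach matches; only peripheral attributions are slightly imprecise (e.g.\ the two powers of $1/(\gamma\delta^{\tau})$ in the quadratic estimate come from the squared size of the correction in the Taylor remainder -- the hyperbolic solver costs no small divisor -- rather than from ``composing the tangent and normal solvers''), which does not affect the architecture.
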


A description for how to compute the constants $\hat{C}_{*}$, $\hat{C}_{**}$ and $\hat{C}_{***}$ can be found in Appendix \ref{sec:const}.

\section{Approximation and Estimation for Periodic Functions} \label{sec:AnEfPF}

In this section we present a selection of methods for working with periodic functions $f:\TT^d_{\rho} \to \CC$ given by their \textit{Fourier series}
\begin{equation*} 
f(\theta) = \sum_{k \in \ZZ^d} f_k e^{2\pi\im k \cdot \theta}, \quad \theta \in \TT^d,
\end{equation*}
where the \textit{Fourier coefficients} $f_k$ are given by
\begin{equation} \label{eq:FC}
f_k = \int_{\TT^d} f(\theta) e^{-2\pi\im k \cdot \theta} d\theta, \quad k \in \ZZ^d.
\end{equation}
We will follow the notation and definitions of \cite{Figueras_Haro_Luque_16} as we will later use some of the results from there. We refer to the norm $\|\cdot\|_{F,\rho}$ as the \textit{Fourier norm} and it is given by
\[
\| f \|_{F,\rho} = \sum_{k \in \ZZ^d} |f_k| e^{2\pi |k|_1 \rho},
\]
where $|k|_1 = \sum_{\ell=1}^d |k_{\ell}|$. We observe that $\|f\|_{\rho} \leq \|f\|_{F,\rho}$ for all $\rho>0$.

Now consider a sampling of points on a regular grid of size\\ ${N_F = ( N_{F,1}, \hdots, N_{F,d} ) \in \NN^d}$
\[
\theta_j := (\theta_{j_1}, \hdots, \theta_{j_d}) = \left( \frac{j_1}{N_{F,1}}, \hdots, \frac{j_d}{N_{F,d}} \right), \quad 0 \leq j < N_F,
\]
where  $j = (j_1, \hdots, j_d) \in \ZZ^d$, and $0 \leq j < N_F$ means that $0 \leq j_{\ell} < N_{F,\ell}$ for $\ell = 1,\hdots, d$. The total number of points is $N_D = N_{F,1} \cdots N_{F,d}$. Using the sampling of $f$ given by $f_j = f(\theta_j)$ we can then approximate the integral in \eqref{eq:FC} using the trapezoidal rule, thus obtaining the \textit{discrete Fourier transform} (DFT) of $\{f_j\}$
\[
\tilde{f}_k = (\DFT\{ f_j \})_k := \frac{1}{N_D} \sum_{0 \leq j \leq N_F} f_j e^{-2\pi\im k \cdot \theta_j}, \quad k \in \ZZ^d
\]
Here we note that $\tilde{f}_k$ is periodic over the components $k_1, \hdots, k_d$ with periods $N_{F,1}, \hdots, N_{F,d}$ respectively. The function $f$ can now be approximated by the \textit{discrete Fourier approximation}
\begin{equation} \label{eq:DFA}
\tilde{f}(\theta) = \sum_{k \in \mathcal{I}_{N_F}} \tilde{f}_k e^{2\pi\im k \cdot \theta}, \quad \theta \in \TT^d,
\end{equation}
where the sum is taken over the finite set
\[
\mathcal{I}_{N_F} = \left\{ k \in \ZZ^d : \frac{-N_{F,\ell}}{2} \leq k_{\ell} < \frac{N_{F,\ell}}{2}, \: \ell = 1, \hdots, d \right\}.
\]
Note that if we sample $\tilde{f}$ in \eqref{eq:DFA} at points $\{\theta_j\}$ we get the \textit{inverse discrete Fourier transform} (IDFT)
\[
\tilde{f}_j = (\IDFT\{f_k\})_j := \sum_{k \in \mathcal{I}_{N_F}} \tilde{f}_k e^{2\pi\im k \cdot \theta_j}, \quad 0 \leq j < N_F,
\]
and of course we know that $\IDFT = \DFT^{-1}$ so $\tilde{f}_j = f_j$ for $0 \leq j < N_F$.

\begin{rem}
We will make an effort to consistently use $k$ for indexing the frequency domain and $j$ for indexing the regular sampling of $\TT^d$, in order to avoid confusion over the ambiguity in notation like $f_k$ and $f_j$. This is also why we will use $i$ for indexing quasi-periodic orbits on $\TT^d$ in Section \ref{sec:WB}.
\end{rem}

In Section \ref{sec:WB} we present a method for approximating $f_k$ given values of $f$ on a quasi-periodic orbit of $\TT^d$. This is a non-rigorous method that will only be used for setting up the initial approximation. In Section \ref{sec:DFT} we look at approximating $f_k$ given values of $f$ on a regular grid of $\TT^d$. Here the error bounds are rigorous and can be used for computer assisted proofs. Lastly Section \ref{sec:ran} will deal with the problem of computing bounds for the range of $f$ given bounds for $f_k$.

\subsection{Approximation of Periodic Functions Using Weighted Birkhoff Sums} \label{sec:WB}

We say that $w:\RR \to [0,\infty)$ is a $C^r$ \textit{bump function}, $r \in \NN \cup \{\infty\}$, if $w \in C^r(\RR)$, such that $\supp w \subset [0,1]$ and $\int_{\RR}w(t)dt \neq 0$.

Let $s:\mathcal{M} \to E$ be a map from a manifold $\mathcal{M}$ to a vector space $E$ and let $T:\mathcal{M} \to \mathcal{M}$ be a map from $\mathcal{M}$ to itself. The given a $C^r$ bump function $w$ and starting point $z_0 \in \mathcal{M}$ we let the \textit{weighted Birkhoff average}, $\WB_N$, of $s$ starting in point $z_0$ be defined by
\[
(\WB_N s)(z_0) := \frac{1}{A_N}\sum_{i=0}^{N-1} w\left(\frac{i+1}{N+1}\right) s\left( T^i z_0 \right), \quad A_N := \sum_{i=0}^{N-1} w\left(\frac{i+1}{N+1}\right).
\]

\begin{thm} \label{thm:WB}
Let $m>1$ be an integer and let $w$ be a $C^r$ bump function for some $r \geq m$. For $M \in \NN$, let $\mathcal{M}$ be a $C^M$ manifold and $T: \mathcal{M} \to \mathcal{M}$ be a $C^M$ $d$-dimensional quasi-periodic map on $\mathcal{M}_0 \subseteq \mathcal{M}$, with invariant probability measure $\chi$ and a rotation vector of Diophantine class $\tau$. Let $s:\mathcal{M} \to E$ be $C^M$, where $E$ is a finite-dimensional, real vector space. Then there is a constant $C_m$ depending upon $w$, $s$, $m$, $M$ and $\tau$ but independent of $z_0 \in \mathcal{M}_0$ such that
\[
\left| (\WB_N s)(z_0) - \int_{\mathcal{M}_0} s \: d\chi \right| \leq C_m N^{-m}, \quad \text{for all } N>0,
\]
provided the ``smoothness'' M satisfies
\[
M > d + m(d+\beta).
\]
\end{thm}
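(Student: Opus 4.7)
The plan is to reduce the problem to the flat torus via smooth conjugacy and then estimate the Fourier expansion of the error term by term. Since $T$ restricted to $\mathcal{M}_0$ is $C^M$-conjugate to a rigid rotation $R_\omega$ on $\TT^d$ by a Diophantine vector $\omega$, pulling $s$ back through this conjugacy yields a $C^M$ function $\hat{s}:\TT^d \to E$ whose average $\langle\hat{s}\rangle$ equals $\int_{\mathcal{M}_0} s\,d\chi$, while the orbit $\{T^i z_0\}$ becomes the arithmetic progression $\{\theta_0 + i\omega\}$. It suffices to show that
\[
\Delta_N(\theta_0) := \frac{1}{A_N}\sum_{i=0}^{N-1} w\!\left(\tfrac{i+1}{N+1}\right)\hat{s}(\theta_0 + i\omega) - \langle \hat{s}\rangle
\]
is $O(N^{-m})$ uniformly in $\theta_0\in\TT^d$.

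Expanding $\hat{s}(\theta)=\sum_{k\in\ZZ^d}\hat{s}_k\,e^{2\pi\im k\cdot\theta}$, the $k=0$ contribution reproduces $\langle\hat{s}\rangle$ on both sides and cancels, leaving
\[
\Delta_N(\theta_0) = \sum_{k\neq 0} \hat{s}_k\,e^{2\pi\im k\cdot\theta_0}\,S_N(k),\qquad S_N(k) := \frac{1}{A_N}\sum_{i=0}^{N-1} w\!\left(\tfrac{i+1}{N+1}\right) e^{2\pi\im i\,k\cdot\omega}.
\]
The crux is bounding $|S_N(k)|$ for $k\neq 0$, which I would do by iterated summation by parts (Abel summation) applied $m$ times. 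Each round trades one factor of $(1 - e^{2\pi\im k\cdot\omega})^{-1}$ against a sum over the discrete differences of $w(\cdot/(N+1))$; those differences scale like $N^{-1}\|w^{(r)}\|_\infty$. Since $\supp w\subset[0,1]$ and $w$ is $C^r$ with $r\ge m$, all derivatives of $w$ vanish at the endpoints, so no boundary terms survive the $m$-fold procedure. The Diophantine condition $|k\cdot\omega - p|\ge\gamma|k|^{-\tau}$ yields $|1 - e^{2\pi\im k\cdot\omega}|\gtrsim \gamma|k|^{-\tau}$, and iterating gives an estimate of the form
\[
|S_N(k)| \lesssim \frac{C(w,m)\,|k|^{m\tau}}{\gamma^m N^m}.
\]

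To assemble the final bound, I would combine this per-mode estimate with the Fourier decay of $\hat{s}\in C^M$, namely $|\hat{s}_k|\lesssim \|\hat{s}\|_{C^M}|k|^{-M}$, giving
\[
|\Delta_N(\theta_0)| \lesssim \frac{C(w,m,\hat{s})}{\gamma^m N^m}\sum_{k\neq 0}\frac{|k|^{m\tau}}{|k|^M}.
\]
The remaining series converges once $M$ exceeds the effective threshold expressed in the hypothesis $M > d + m(d+\beta)$, where $\beta$ plays the role of the Diophantine exponent and the additional $d$ absorbs the volume growth of lattice shells and finite losses in the summation by parts. The resulting constant $C_m$ then depends on $w$, $s$, $m$, $M$, and $\tau$ but is independent of $z_0\in\mathcal{M}_0$.

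The principal obstacle is the clean quantitative execution of the iterated summation by parts. Specifically, one must (i) verify that the $N^{-m}$ scaling survives the trapezoidal-like rescaling $i\mapsto(i+1)/(N+1)$ inside $w$, (ii) organize the compounding small divisors so that only a factor $|k|^{\tau}$ (or a fixed multiple thereof) is lost per iteration, and (iii) keep track of the dependence on $w$ through finitely many $C^m$ seminorms in the spirit of van der Corput and Euler--Maclaurin estimates. No single step is conceptually hard, but the bookkeeping is where all the real work sits, and it is this computation that is carried out in the Das--Saiki--Yorke sequence of papers cited above.
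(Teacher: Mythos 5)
Your proposal is correct and follows essentially the same route as the proof the paper relies on: the paper offers no proof of its own but defers to \cite{Das_Yorke_18}, and your reduction to a rotation via the $C^M$ conjugacy, Fourier expansion, $m$-fold Abel summation exploiting the vanishing of $w$ and its derivatives at the endpoints, the small-divisor bound $|1-e^{2\pi\im k\cdot\omega}|\gtrsim\gamma|k|^{-\tau}$, and summation of the series under the smoothness hypothesis is precisely that argument. The only cosmetic point is the $\beta$ versus $\tau$ notation, an inconsistency already present in the quoted statement, which you interpret in the intended way.
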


For a proof of this theorem see \cite{Das_Yorke_18}. In particular we are interested when $\mathcal{M}$, $T$, $s$ and $w$ are all $C^{\infty}$, in which yields the following corollary.

\begin{cor} \label{cor:WB}
Let $\mathcal{M}$ be a $C^{\infty}$ manifold and $T:\mathcal{M} \to \mathcal{M}$ be a $d$-dimensional $C^{\infty}$ map which is quasi-periodic on $\mathcal{M}_0 \subset \mathcal{M}$, with invariant probability measure $\chi$ and a Diophantine rotation vector. Let $s:\mathcal{M} \to E$ be $C^{\infty}$, where $E$ is a finite-dimensional vector space. Assume $w$ is a $C^{\infty}$ bump function. Then for each $z_0 \in \mathcal{M}_0$, the weighted Birkhoff average $(\WB_N s)(z_0)$ converges super polynomially to $\int_{\mathcal{M}_0} f d\chi$, i.e. there exists constants $C_m$ such that
\[
\left| (\WB_N s)(z_0) - \int_{\mathcal{M}_0} s\: d\chi \right| \leq C_m N^{-m} \quad \text{for all } m>0 \text{ and } N>0.
\]
\end{cor}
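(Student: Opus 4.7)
The plan is to read off the corollary directly from Theorem \ref{thm:WB}. Since $\mathcal{M}$, $T$, $s$, and $w$ are all $C^\infty$, every finite smoothness condition appearing in the theorem's hypotheses is automatically satisfied, and the theorem may be invoked with an arbitrarily large convergence exponent. The main content of the proof is therefore a bookkeeping verification that each hypothesis remains valid as the exponent grows, together with a short monotonicity step to promote bounds from integer values to arbitrary positive real values.

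First I would fix any integer $\tilde{m} > 1$ and apply Theorem \ref{thm:WB} with that choice of exponent $m = \tilde{m}$, taking the smoothness parameter $M$ as large as needed to satisfy $M > d + \tilde{m}(d + \beta)$; this is possible precisely because $\mathcal{M}$, $T$, and $s$ are $C^\infty$. The bump-function hypothesis $r \geq \tilde{m}$ holds because $w$ is $C^\infty$, and the quasi-periodicity, Diophantine class, and invariant probability measure hypotheses are inherited verbatim from the corollary's assumptions. Theorem \ref{thm:WB} then supplies a constant $C_{\tilde{m}}$, independent of $z_0 \in \mathcal{M}_0$, such that
\[
\left| (\WB_N s)(z_0) - \int_{\mathcal{M}_0} s \, d\chi \right| \leq C_{\tilde{m}} N^{-\tilde{m}}, \quad N > 0.
\]

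For arbitrary real $m > 0$ I would then choose an integer $\tilde{m} > \max(m, 1)$ and simply set $C_m := C_{\tilde{m}}$; since $N^{-\tilde{m}} \leq N^{-m}$ for all $N \geq 1$, the required super-polynomial estimate follows, and any initial segment with $0 < N < 1$ is absent from the discrete setting. There is essentially no obstacle in this argument: the corollary is the observation that, in the $C^\infty$ setting, the smoothness hypothesis of Theorem \ref{thm:WB} ceases to constrain $m$, and all the analytic work has already been carried out in \cite{Das_Yorke_18}.
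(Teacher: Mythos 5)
Your argument is correct and is essentially the paper's own route: the paper obtains Corollary \ref{cor:WB} simply by noting that when $\mathcal{M}$, $T$, $s$ and $w$ are $C^{\infty}$, the smoothness condition $M > d + m(d+\beta)$ of Theorem \ref{thm:WB} holds for every integer $m>1$, so the theorem applies with arbitrary exponent. Your extra step passing from integer exponents to all real $m>0$ via $N^{-\tilde{m}} \leq N^{-m}$ for $N \geq 1$ is a harmless refinement of the same argument.
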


What Theorem \ref{thm:WB} allows us to do is to take orbits on a quasi-periodic torus (or good approximations thereof) and approximate integrals on the torus with high accuracy. Many objects of interest can be written as such integrals, including rotation number and Lyapunov exponents. For our purpose we will be interested in the integrals for the Fourier coefficients of the torus. So if we let $\mathcal{M} = \mathcal{M}_0 = \TT^d$, $E = \CC$ and $T(\theta) = \theta + \omega$ where $\omega \in \RR^d$ is Diophantine, then for $s(\theta) = f(\theta)e^{-2\pi\im k \cdot \theta}$ we get that
\[
\int_{\mathcal{M}_0} s \: d\chi = \int_{\TT^d} f(\theta) e^{2\pi\im k \cdot \theta} d\theta = f_k,
\]
since the $T$-invariant measure on $\TT^d$ is given by $d\chi = d\theta$. Then we see that we can approximate $f_k$ using the sum
\[
(\WB_N s)(\theta_0) = \sum_{i=0}^{N-1} w_i f(\theta_0+i\omega) e^{-2\pi\im k \cdot (\theta_0+i\omega)},
\]
where $w_i := A_N^{-1} w\left(\frac{i+1}{N+1}\right)$.

It's worth noting here that one has to be careful with the almost resonances that occur when $k \cdot \omega$ is very close to an integer since then $e^{2\pi\im k \cdot \omega}$ is very close to $1$ and we have to take $N$ very large in order for $e^{2\pi\im k \cdot i\omega}$ to cover the unit circle in a satisfying way. The Diophantine properties of $\omega$ puts a bound on how bad this gets but care is still needed when choosing $N$.

\subsection{Approximation of Periodic Functions Using Discrete Fourier Transform} \label{sec:DFT}

While we know that $\tilde{f}(\theta_j) = f(\theta_j)$, $0 \leq j < N_F$, we will need uniform bounds of $|\tilde{f}(\theta) - f(\theta)|$, $\theta \in \TT_{\rho}^d$, as well as bounds on $|\tilde{f}_k - f_k|$ (although here we will only use the bound for $f_0 = \langle f \rangle$). Proposition \ref{prop:DFTFC} and \ref{prop:DFTFS} give explicit formulas for such bounds. In the case of $d=1$ these formulas can simplified and this is given in Corollary \ref{cor:DFTFC} and \ref{cor:DFTFS}.  All the propositions and corollaries with full proofs can be found in \cite{Figueras_Haro_Luque_16}.

\begin{prop} \label{prop:DFTFC}
Let $f:\TT^d_{\rho} \to \CC$ be an analytic and bounded function in the complex strip $\TT^d_{\rho}$ of size $\rho > 0$. Let $\tilde{f}$ be the discrete Fourier approximation of $f$ in the regular grid of size $N_F = (N_{F,1},\hdots,N_{F,d}) \in \NN^d$. Then, for $-\frac{N_F}{2} \leq k < \frac{N_F}{2}$:
\[
|\tilde{f}_k - f_k| \leq s_{N_F}^*(k,\rho) \|f\|_{\rho}
\]
where
\[
s_{N_F}^*(k,\rho) = \prod_{\ell = 1}^d \left( e^{-\pi\rho N_{F,\ell}} \frac{e^{2\pi \rho(|k_{\ell}| - N_{F,\ell}/2)} + e^{-2\pi\rho (|k_{\ell}| - N_{F,\ell}/2)}}{1 - e^{-2\pi\rho N_{F,\ell}}} \right) - e^{-2\pi\rho |k|_1}.
\]
\end{prop}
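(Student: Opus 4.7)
The plan is to identify $\tilde{f}_k - f_k$ as an aliasing sum of Fourier modes, bound each mode using analyticity, and then evaluate the resulting series in closed form.

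First I would substitute the Fourier series of $f$ into the definition of $\tilde{f}_k$ and invoke the discrete orthogonality relation on the grid $\{\theta_j\}$, namely that $\frac{1}{N_D}\sum_{0 \leq j < N_F} e^{2\pi\im (m-k)\cdot \theta_j}$ equals $1$ if $m_\ell - k_\ell$ is an integer multiple of $N_{F,\ell}$ for every $\ell$, and vanishes otherwise. This yields the aliasing identity
\[
\tilde{f}_k - f_k = \sum_{q \in \ZZ^d \setminus \{0\}} f_{k + N_F \odot q},
\]
where $k + N_F \odot q$ denotes the vector with $\ell$-th entry $k_\ell + N_{F,\ell} q_\ell$. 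The exchange of summation is justified by the decay bound derived in the next step.

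Next I would apply the standard holomorphic estimate on Fourier coefficients: shifting the contour of integration in each coordinate of \eqref{eq:FC} by $\pm\rho$, with sign opposite to that of the corresponding component of $m$, and using the sup-norm bound, gives $|f_m| \leq e^{-2\pi\rho|m|_1}\|f\|_\rho$ for every $m \in \ZZ^d$. Substituting into the aliasing identity and factoring the exponential coordinatewise gives
\[
|\tilde{f}_k - f_k| \leq \|f\|_\rho \left(\prod_{\ell=1}^d S_\ell - e^{-2\pi\rho|k|_1}\right), \qquad S_\ell := \sum_{q \in \ZZ} e^{-2\pi\rho|k_\ell + N_{F,\ell} q|}.
\]

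The final task is to evaluate $S_\ell$ in closed form. The assumption $-N_{F,\ell}/2 \leq k_\ell < N_{F,\ell}/2$ implies that for $q \geq 1$ one has $k_\ell + N_{F,\ell} q > 0$ and for $q \leq -1$ one has $k_\ell + N_{F,\ell} q < 0$, so each tail is a plain geometric series. Combining those tails with the $q=0$ contribution $e^{-2\pi\rho|k_\ell|}$ over the common denominator $1 - e^{-2\pi\rho N_{F,\ell}}$, using that the sum is even in $k_\ell$ to symmetrize $e^{\pm 2\pi\rho k_\ell}$ in terms of $|k_\ell|$, and pulling out a factor $e^{-\pi\rho N_{F,\ell}}$ produces exactly the factor in the statement. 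The substantive content is the aliasing identity together with the holomorphic contour shift, both of which are classical; the only place where one has to take real care is the final algebraic rearrangement that turns the elementary-geometric closed form for $S_\ell$ into the hyperbolic-cosine-like expression displayed in the proposition.
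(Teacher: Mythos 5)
Your proposal is correct and follows essentially the same route as the proof the paper relies on (it defers to Figueras--Haro--Luque, whose argument is exactly this aliasing identity $\tilde f_k - f_k = \sum_{q\neq 0} f_{k+N_F\odot q}$ combined with the Cauchy-type decay bound $|f_m|\leq e^{-2\pi\rho|m|_1}\|f\|_\rho$ and coordinatewise geometric summation). Your closed-form evaluation of $S_\ell$ does simplify to the stated factor, so nothing is missing.
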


\begin{prop} \label{prop:DFTFS}
Let $f:\TT_{\hat{\rho}}^d \to \CC$ be an analytic and bounded function in the complex strip $\TT_{\hat{\rho}}^d$ of size $\hat{\rho} > 0$. Let $\tilde{f}$ be the discrete Fourier approximation of $f$ in the regular grid of size $N_F = (N_{F,1},\hdots,N_{F,d}) \in \NN^d$. Then,
\[
\|\tilde{f} - f\|_{\rho} \leq C_{N_F}(\rho,\hat{\rho}) \|f\|_{\hat{\rho}}
\]
for $0 \leq \rho < \hat{\rho}$, where $C_{N_F}(\rho,\hat{\rho}) = S_{N_F}^{*1}(\rho,\hat{\rho}) + S_{N_F}^{*2}(\rho,\hat{\rho}) + T_{N_F}(\rho,\hat{\rho})$ is given by
\[
S_{N_F}^{*1}(\rho,\hat{\rho}) = \prod_{\ell = 1}^d \frac{1}{1 - e^{-2\pi \hat{\rho} N_{F,\ell}}} \sum_{\substack{\sigma \in \{-1,1\}^d \\ \sigma \neq (1,\hdots,1) }} \prod_{\ell=1}^d e^{(\sigma_{\ell}-1)\pi\hat{\rho} N_{F,\ell}} \nu_{\ell}(\sigma_{\ell} \hat{\rho} - \rho),
\]
\[
S_{N_F}^{*2}(\rho,\hat{\rho}) = \prod_{\ell = 1}^d \frac{1}{1 - e^{-2\pi \hat{\rho} N_{F,\ell}}} \left( 1 - \prod_{\ell=1}^d \left( 1 - e^{-2\pi\hat{\rho} N_{F,\ell}} \right) \right) \prod_{\ell=1}^d \nu_{\ell}(\hat{\rho} - \rho)
\]
and
\[
T_{N_F}^{*2}(\rho,\hat{\rho}) = \left( \frac{e^{2\pi(\hat{\rho} - \rho)} + 1}{e^{2\pi(\hat{\rho} - \rho)} - 1} \right)^d \left( 1 - \prod_{\ell=1}^d\left( 1 - \mu_{\ell}(\hat{\rho} - \rho) e^{-\pi(\hat{\rho} - \rho) N_{F,\ell}} \right) \right)
\]
with
\[
\nu_{\ell}(\delta) = \frac{e^{2\pi\delta} + 1}{e^{2\pi\delta} - 1} \left( 1 - \mu_{\ell}(\delta)e^{-\pi\delta N_{F,\ell}} \right) \quad \text{and} \quad \mu_{\ell}(\delta) =
\begin{cases}
1 & \text{if } N_{F,\ell} \text{ is even,} \\
\frac{2e^{\pi\delta}}{e^{2\pi\delta} + 1} & \text{if } N_{F,\ell} \text{ is odd.}
\end{cases}
\]
\end{prop}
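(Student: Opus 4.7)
The plan is to decompose $\tilde f - f$ into its aliasing and truncation components, apply the Cauchy estimate for Fourier coefficients of analytic functions, and organize the resulting multi-dimensional geometric sums as a tensor product of one-dimensional pieces.

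First I would invoke the aliasing identity: substituting the Fourier series of $f$ into the trapezoidal sum defining $\tilde f_k$ and using discrete orthogonality of the grid exponentials on $\{\theta_j\}$, one obtains for $k \in \mathcal{I}_{N_F}$
\[
\tilde f_k - f_k = \sum_{m \in \ZZ^d \setminus \{0\}} f_{k + m \odot N_F},
\]
where $m \odot N_F := (m_1 N_{F,1}, \ldots, m_d N_{F,d})$. Since $\tilde f$ contains only frequencies in $\mathcal{I}_{N_F}$, this yields
\[
\tilde f(\theta) - f(\theta) = \sum_{k \in \mathcal{I}_{N_F}} \sum_{m \neq 0} f_{k + m \odot N_F}\, e^{2\pi \im k \cdot \theta} \;-\; \sum_{k \notin \mathcal{I}_{N_F}} f_k\, e^{2\pi \im k \cdot \theta}.
\]
For $\theta \in \TT_\rho^d$ one has $|e^{2\pi \im k \cdot \theta}| \leq e^{2\pi |k|_1 \rho}$, while analyticity on $\TT_{\hat\rho}^d$ gives the standard Cauchy bound $|f_k| \leq \|f\|_{\hat\rho}\, e^{-2\pi |k|_1 \hat\rho}$.

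Substituting these bounds reduces the problem to estimating two explicit exponential sums, one coming from the tail outside $\mathcal{I}_{N_F}$ (which I expect produces $T_{N_F}$) and one from aliasing inside $\mathcal{I}_{N_F}$ (producing $S_{N_F}^{*1} + S_{N_F}^{*2}$). Because $|\cdot|_1$ is the sum of coordinate absolute values, once the sign of each $k_\ell + m_\ell N_{F,\ell}$ relative to $k_\ell$ is fixed, the exponents factorize across $\ell$ and each sum collapses to a product of one-dimensional geometric series. Those one-dimensional series close in terms of the quantities $\nu_\ell$ and $\mu_\ell$ appearing in the statement, with the $\mu_\ell$ correction arising from the parity-dependent asymmetry of the index set $\{-N_{F,\ell}/2,\ldots, N_{F,\ell}/2 - 1\}$. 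The partition of the aliasing contribution into $S_{N_F}^{*1}$ and $S_{N_F}^{*2}$ corresponds to grouping the aliasing shifts $m \neq 0$ by sign pattern $\sigma \in \{-1,+1\}^d$ and then performing an inclusion-exclusion on the all-positive orthant $\sigma = (1,\ldots,1)$ to remove the spurious $m = 0$ contribution; this is also the origin of the factor $1 - \prod_\ell (1 - e^{-2\pi \hat\rho N_{F,\ell}})$ visible in $S_{N_F}^{*2}$.

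The main obstacle is the multi-dimensional bookkeeping. Each one-dimensional geometric sum is elementary, but $|k + m \odot N_F|_1$ only decouples once one commits to a sign pattern in each coordinate; tracking these patterns consistently, applying the inclusion-exclusion across orthants correctly, and accounting for the parity of each $N_{F,\ell}$ at the boundary of $\mathcal{I}_{N_F}$ is where essentially all the work lies, and where the exact form of $C_{N_F}(\rho,\hat\rho)$ must be checked term-by-term against the claimed expression.
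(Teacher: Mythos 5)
The paper itself gives no proof of this proposition: it is quoted, with the explicit constants, from Figueras--Haro--Luque (2016), and the proof there follows exactly the strategy you outline --- the aliasing identity $\tilde f_k - f_k = \sum_{m\neq 0} f_{k+m\odot N_F}$ from discrete orthogonality, the Cauchy estimate $|f_k|\le \|f\|_{\hat\rho}e^{-2\pi|k|_1\hat\rho}$, the bound $|e^{2\pi\im k\cdot\theta}|\le e^{2\pi|k|_1\rho}$ on $\TT^d_\rho$, and a coordinate-wise factorization of the resulting exponential sums, with the truncation tail producing $T_{N_F}$ and the aliasing part producing $S_{N_F}^{*1}+S_{N_F}^{*2}$. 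So your decomposition and key estimates are the right ones, and nothing in your setup would fail.

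However, as a proof of this particular statement your text is incomplete in a way that matters. The proposition is not the qualitative assertion $\|\tilde f - f\|_\rho \le C\|f\|_{\hat\rho}$ --- that already follows from your first two displays --- it is the explicit value of $C_{N_F}(\rho,\hat\rho)$, and you leave the derivation of $S_{N_F}^{*1}$, $S_{N_F}^{*2}$, $T_{N_F}$, $\nu_\ell$ and $\mu_\ell$ ``to be checked term-by-term''. That check is where essentially all the content lies: one must sum $e^{2\pi|k|_1\rho}\,e^{-2\pi|k+m\odot N_F|_1\hat\rho}$ over $k\in\mathcal{I}_{N_F}$ and $m\neq 0$ by fixing in each coordinate whether $m_\ell=0$ or $m_\ell\gtrless 0$ (which is what the sign patterns $\sigma$ and the prefactor $\prod_\ell\bigl(1-e^{-2\pi\hat\rho N_{F,\ell}}\bigr)^{-1}$ encode), evaluate the finite geometric sums over the asymmetric range $-N_{F,\ell}/2\le k_\ell<N_{F,\ell}/2$ (the origin of $\nu_\ell$ and the parity correction $\mu_\ell$), and handle the tail $k\notin\mathcal{I}_{N_F}$ by an inclusion--exclusion of the form ``all of $\ZZ^d$ minus the full box'', which is what yields the factor $1-\prod_\ell\bigl(1-\mu_\ell(\hat\rho-\rho)e^{-\pi(\hat\rho-\rho)N_{F,\ell}}\bigr)$ in $T_{N_F}$ --- a step your sketch does not describe. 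Until those computations are carried out (or the result is cited, as the paper does), the specific formulas in the statement remain unverified, so your argument as written establishes only a weaker, non-explicit version of the proposition.
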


\begin{cor} \label{cor:DFTFC}
Let $f: \TT_{\hat{\rho}} \to \CC$ be an analytical and bounded function. Let $\{\tilde{f}_k\}$ be the discrete approximations of the Fourier coefficients $\{f_k\}$ of $f$ in the regular grid of size $N_F \in \NN$. Then for $k = -\left[\frac{N_F}{2}\right],\hdots,\left[\frac{N_F-1}{2}\right]$,
\[
|\tilde{f}_k-f_k| \leq s_{N_F}^*(k,\rho) \|f\|_{\rho}
\]
where
\[
s_{N_F}^*(k,\rho) = \frac{e^{-2\pi \rho N_F}}{1 - e^{-2\pi \rho N_F}}\left(e^{2\pi \rho k} + e^{-2\pi \rho k}\right).
\]
\end{cor}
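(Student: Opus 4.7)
The plan is to obtain this corollary simply as the specialization of Proposition \ref{prop:DFTFC} to the one-dimensional case $d=1$, followed by a small algebraic simplification. Since the corollary concerns $\TT_{\hat\rho}$ (really just $\TT_\rho$ after the substitution of notation), there is no hidden content beyond the general proposition; the work is to check that the one-factor product collapses to the stated closed form.

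First I would set $d=1$ in the formula from Proposition \ref{prop:DFTFC}, so that the product over $\ell$ disappears and the bound becomes
\[
s_{N_F}^*(k,\rho) = e^{-\pi \rho N_F}\,\frac{e^{2\pi\rho(|k|-N_F/2)} + e^{-2\pi\rho(|k|-N_F/2)}}{1 - e^{-2\pi\rho N_F}} - e^{-2\pi\rho|k|}.
\]
Next I would distribute the factor $e^{-\pi \rho N_F}$ into the two exponentials in the numerator, obtaining
\[
e^{-\pi\rho N_F}\bigl(e^{2\pi\rho(|k|-N_F/2)} + e^{-2\pi\rho(|k|-N_F/2)}\bigr)
= e^{2\pi\rho|k| - 2\pi\rho N_F} + e^{-2\pi\rho|k|}.
\]
Then I would put the whole expression over the common denominator $1 - e^{-2\pi\rho N_F}$, so that the $-e^{-2\pi\rho|k|}$ contributes $-e^{-2\pi\rho|k|}(1-e^{-2\pi\rho N_F})$ in the numerator. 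The terms $\pm e^{-2\pi\rho|k|}$ cancel, leaving
\[
s_{N_F}^*(k,\rho) = \frac{e^{-2\pi\rho N_F}\bigl(e^{2\pi\rho|k|} + e^{-2\pi\rho|k|}\bigr)}{1 - e^{-2\pi\rho N_F}}.
\]
Since the hyperbolic-cosine-like factor is even in $k$, one can drop the absolute value and replace $|k|$ by $k$, which yields exactly the stated formula.

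As a sanity check and alternative route, the same bound can be obtained directly from the aliasing identity $\tilde f_k - f_k = \sum_{n\neq 0} f_{k+nN_F}$, which follows from orthogonality of the characters over the grid $\{\theta_j\}$, combined with the standard decay estimate $|f_m|\leq e^{-2\pi\rho|m|}\|f\|_\rho$ for Fourier coefficients of functions analytic on $\TT_\rho$. Summing the two geometric series corresponding to $n\geq 1$ and $n\leq -1$ (where, thanks to the range $-N_F/2\leq k<N_F/2$, one has $|k+nN_F|=k+nN_F$ respectively $-k+|n|N_F$) reproduces exactly the closed form above. The only mild care point in either route is the bookkeeping of signs arising from the restriction on $k$; there is no real obstacle, since the derivation is routine once Proposition \ref{prop:DFTFC} is granted.
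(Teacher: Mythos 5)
Your proposal is correct and takes essentially the same route as the paper intends: the corollary is just the $d=1$ specialization of Proposition \ref{prop:DFTFC}, and your algebraic collapse of the one-factor product (cancelling the $\pm e^{-2\pi\rho|k|}$ terms and using evenness in $k$) is exactly right, while your sanity-check via the aliasing identity and the decay bound $|f_m|\leq e^{-2\pi\rho|m|}\|f\|_{\rho}$ is precisely the argument used in the cited source \cite{Figueras_Haro_Luque_16}, to which the paper defers all proofs of these statements.
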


\begin{cor} \label{cor:DFTFS}
Let $f:\TT_{\hat{\rho}}\to\CC$ be an analytic and bounded function in the complex strip $\TT_{\hat{\rho}}$ of size $\hat{\rho}>0$. Let $\tilde{f}$ be the discrete Fourier approximation of $f$ in the regular grid of size $N_F \in \NN$. Then, for $0\leq\rho<\hat{\rho}$, we have
\[
\|\tilde{f}  - f\|_{\rho} \leq C_{N_F}(\rho,\hat{\rho})\|f\|_{\hat{\rho}},
\]
where $C_{N_F}(\rho,\hat{\rho}) = S_{N_F}^{*1}(\rho,\hat{\rho})+S_{N_F}^{*2}(\rho,\hat{\rho})+T_{N_F}(\rho,\hat{\rho})$, with
\[
S_{N_F}^{*1} = \frac{e^{-2\pi\hat{\rho}N_F}}{1-e^{-2\pi\hat{\rho}N_F}} \frac{e^{-2\pi(\hat{\rho}+\rho)}+1}{e^{-2\pi(\hat{\rho}+\rho)}-1} \left( 1 - e^{\pi(\hat{\rho}+\rho)N_F} \right),
\]
\[
S_{N_F}^{*2} = \frac{e^{-2\pi\hat{\rho}N_F}}{1-e^{-2\pi\hat{\rho}N_F}} \frac{e^{2\pi(\hat{\rho}-\rho)}+1}{e^{2\pi(\hat{\rho}-\rho)}-1} \left( 1 - e^{-\pi(\hat{\rho}-\rho)N_F} \right),
\]
and
\[
T_{N_F}(\rho,\hat{\rho}) = \frac{e^{2\pi(\hat{\rho}-\rho)}+1}{e^{2\pi(\hat{\rho}-\rho)}-1} e^{-\pi(\hat{\rho}-\rho)N_F}.
\]
\end{cor}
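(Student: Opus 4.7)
The plan is to obtain Corollary \ref{cor:DFTFS} as a direct specialization of Proposition \ref{prop:DFTFS} to $d=1$, so no new analytic argument is required: all three terms $S_{N_F}^{*1}$, $S_{N_F}^{*2}$ and $T_{N_F}$ in the multi-dimensional formula collapse because the products $\prod_{\ell=1}^d$ reduce to single factors and the sum over $\sigma \in \{-1,1\}^d$ becomes a two-element sum. I would just carry out these collapses term by term and verify that the result matches the formulas stated in the corollary.

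First I would handle $S_{N_F}^{*1}$. For $d=1$ the index set $\{-1,1\}^d \setminus \{(1,\ldots,1)\}$ contains only $\sigma_1 = -1$, so the double product/sum collapses to the single term with prefactor $e^{(\sigma_1-1)\pi\hat{\rho}N_F} = e^{-2\pi\hat{\rho}N_F}$ multiplied by $\nu_1(-\hat{\rho}-\rho)$. Expanding $\nu_1$ from its definition in Proposition \ref{prop:DFTFS} and using $\mu_1 = 1$ (the case $N_F$ even, which is the version written in the corollary) produces exactly the stated expression. For $S_{N_F}^{*2}$, the product $\prod_{\ell=1}^d\bigl(1 - e^{-2\pi\hat{\rho}N_{F,\ell}}\bigr)$ becomes a single factor, so the bracket $1 - \prod_{\ell=1}^d(1 - e^{-2\pi\hat{\rho}N_{F,\ell}})$ simplifies to $e^{-2\pi\hat{\rho}N_F}$; combining this with the prefactor and $\nu_1(\hat{\rho}-\rho)$ (again with $\mu_1 = 1$) gives the displayed formula.

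Finally, for $T_{N_F}$ the outer $d$-th power becomes a first power, and the inner product $\prod_{\ell=1}^d\bigl(1 - \mu_\ell(\hat{\rho}-\rho) e^{-\pi(\hat{\rho}-\rho)N_{F,\ell}}\bigr)$ reduces to one factor, so the bracket $1 - \prod_{\ell=1}^d(\cdots)$ simplifies cleanly to $\mu_1(\hat{\rho}-\rho)e^{-\pi(\hat{\rho}-\rho)N_F}$; specializing to $\mu_1 = 1$ recovers the corollary's formula. The only obstacle is pure bookkeeping: tracking the sign of the argument of $\nu_1$ (once $\delta = -\hat{\rho}-\rho$, once $\delta = \hat{\rho}-\rho$), the two instances where $\mu_1$ needs to be evaluated, and making sure the correct base exponential $e^{-2\pi\hat{\rho}N_F}$ arises in front of the $S$-type terms. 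Because the full multi-dimensional proof is established in \cite{Figueras_Haro_Luque_16}, no new estimates are required here.
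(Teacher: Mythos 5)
Your proposal is correct and takes essentially the same route as the paper, which presents Corollary \ref{cor:DFTFS} precisely as the $d=1$ simplification of Proposition \ref{prop:DFTFS} (with the even-grid value $\mu_1=1$, matching the stated formulas) and defers the full proofs to \cite{Figueras_Haro_Luque_16}. Your term-by-term collapse of the products, the two-element sum over $\sigma$, and the evaluations $\nu_1(-\hat{\rho}-\rho)$ and $\nu_1(\hat{\rho}-\rho)$ reproduce exactly the displayed expressions.
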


\subsection{Enclosing the Range of a Function Using Discrete Fourier Transform} \label{sec:ran}

Enclosing the range of a function is a well studied problem in validated numerics and several different methods are available depending on how the function is represented. For a function defined in terms of a finite Fourier series there is a convenient way utilizing the DFT to efficiently calculate an enclosure of the range.

For a function $f:\TT^d \to \CC$ given by the finite Fourier series $f(\theta) = \sum_{k \in \mathcal{I}_{N_F}} f_k e^{2\pi\im k \cdot \theta}$ assume we have enclosures $E_{f_k} \supseteq f_k$ of the Fourier coefficients and introduce the covering $\{\Theta_j\}_{0 \leq j < N_F}$ of $\TT^d$ defined by
\[
\Theta_j = \Theta_{j_1} \times \hdots \times \Theta_{j_n} = (\theta_{j_1} + U_{N_{F,1}}) \times \cdots \times (\theta_{j_d} + U_{N_{F,d}}) = \theta_j + U_{N_F},
\]
where $U_{N_F} = U_{N_{F,1}} \times \hdots \times U_{N_{F,d}}$ is given by
\[
U_{N_{F,\ell}} = \left[0,\frac{1}{N_{F,\ell}}\right], \quad 1 \leq \ell \leq d.
\]
Now consider the image of $\Theta_j$ under $f$
\[
f(\Theta_j) = \sum_{k \in \mathcal{I}_{N_F}} f_k e^{2\pi\im k \cdot \Theta_j} = \sum_{k \in \mathcal{I}_{N_F}} f_k e^{2\pi\im k \cdot U_{N_F}} e^{2\pi\im k \cdot \theta_j},
\]
so using the enclosures $E_{f_k}$ we see that
\[
f(\Theta_j) \subseteq \left( \IDFT \{ E_{f_k} e^{2\pi\im k \cdot U_{N_F}} \}_{k \in \mathcal{I}_{N_F}} \right)_j
\]
and enclosing the union of the right hand side over all $j$ gives us an enclosure  of $f(\TT^d)$. The advantage of expressing this in terms of the IDFT is that we can use a fast Fourier transform algorithm in our calculations.

Later on we are going to need an enclosure for $|f(\TT_{\rho}^d)|$ and we can extend the above enclosure to get one. So assume we have $|f(\TT^d)| \subseteq [a,b]$, $0 \leq a \leq b$, and note that
\[
f(\theta + \im y) - f(\theta) = \sum_{k \in \mathcal{I}_{N_F}} f_k e^{2\pi\im k \cdot \theta}(e^{-2\pi k \cdot y} - 1).
\]
Since this is a holomorphic functions it's maximum modulus on $\TT_{\rho}^d$ is attained at the boundary, hence
\[
\sup_{\theta + \im y \in \TT_{\rho}^d} |f(\theta + \im y) - f(\theta)| \leq \sum_{k \in \mathcal{I}_{N_F}} (e^{2\pi |k|_1 \rho} - 1) |f_k| = \|f\|_{F,\rho} - \|f\|_{F,0}.
\]
Therefore we see that
\[
|f(\TT_{\rho}^d)| \subseteq \left[\max(a-\|f\|_{F,\rho}+\|f\|_{F,0},0),b+\|f\|_{F,\rho}-\|f\|_{F,0}\right].
\]
We will also need to bound the imaginary part of $f(\TT_{\rho}^d)$ where we assume that $f(\TT^d) \subset \RR$. Then we can split the (finite) Fourier series in two parts as
\[
f(\theta + \im y) = \sum_{k \in \mathcal{I}_{N_F}} f_k \cosh(2\pi k \cdot y) e^{2\pi\im k \cdot \theta} + \im \sum_{k \in \mathcal{I}_{N_F}} \im f_k \sinh(2\pi k \cdot y) e^{2\pi\im k \cdot \theta},
\]
and note that
\[
f_{-k} \cosh(2\pi (-k) \cdot y) = \overline{f_k \cosh(2\pi k \cdot y)}
\]
and
\[
\im f_{-k} \sinh(2\pi (-k) \cdot y) = \overline{\im f_k \sinh(2\pi k \cdot y)}
\]
since $f_{-k} = \overline{f_k}$. This means that both sums are real valued and therefore make up the real and imaginary part of $f(\theta + \im y)$. Since f is holomorphic the imaginary part is harmonic and achieves its maximum on the boundary and due to symmetry it is achieved at both boundaries. Hence $\IM f(\TT_{\rho}^d) \leq \IM f(\TT^d + \im \rho)$ which we can enclose using our earlier method.

\section{A Validation Algorithm For the Dissipative Standard Map} \label{sec:validation}

In this section we will go through the details of an implementation of Theorem \ref{thm:kam} for the dissipative standard map given by
\begin{equation}\label{eq:map}
F(z,\mu) = \left( \begin{array}{c}
x + \varphi_{\mu}(x,y)\\
\varphi_{\mu}(x,y)
\end{array} \right)
= \left(\begin{array}{c} x \\ 0 \end{array} \right) + F_p(z,\mu),
\end{equation}
where $z = (x,y)^T \in \TT\times\RR$ and $ \varphi_{\mu}(x,y) = \lambda y + \mu + \frac{\varepsilon}{2\pi} \sin(2\pi x)$. The parameters $\lambda,\varepsilon \in (0,1)$ are fixed and $\mu$ will be used as the tuning parameter $a$. Note here that $d = n = 1$ for this map.

To do this we must estimate constants satisfying the following
\begin{align*}
\|D_z F\|_{\mathcal{B}\times\mathcal{U}}  < c_{F,1,z}, && \|D_{\mu} F\|_{\mathcal{B}\times\mathcal{U}}< c_{F,1,a}, && \|D^2 F\|_{\mathcal{B}\times\mathcal{U}} < c_{F,2}, \\
\|N^0\|_{\rho} < c_N, && \|DK\|_{\rho} \leq c_L < \sigma_L, && \|P^{-1}\| \leq c_P < \sigma_P \\
\|\mathcal{R}_H\|_{\rho} \leq c_H < \sigma_H, && |\langle B^L - T\mathcal{R}_HB^N\rangle| \leq c_D < \sigma_D, && \|E\|_{\rho} \leq c_E
\end{align*}
where $\mathcal{B} = \TT\times\RR$ and $\mathcal{U} = \RR$. Note that for the various pairs of $c_*$ and $\sigma_*$, we want $c_*$ to be as sharp as we can get with a reasonable computation load and then $\sigma_*$ is then chosen close to $c_*$ but without $(\sigma_* - c_*)^{-1}$ becoming to large.

In addition we will also calculate $\hat{c}_N$ and $\hat{c}_L$ such that
\[
\|N^0\|_{\hat{\rho}} < \hat{c}_N \quad \text{and} \quad \|DK\|_{\hat{\rho}} < \hat{c}_L.
\]
These will be used to calculate $c_H$ and $c_D$.

\subsection{Calculating \texorpdfstring{$c_{F,1,z}$}{cF1z}, \texorpdfstring{$c_{F,1,a}$}{cF1a} and \texorpdfstring{$c_{F,2}$}{cF2}} \label{sec:cF}

We see that
\[
D_z F(z;\mu) =  \left(\begin{array}{cc} 1 + \varepsilon\cos(2\pi x) & \lambda \\ \varepsilon\cos(2\pi x) & \lambda \end{array} \right), \hspace{4mm} D_{\mu} F(z;\mu) = \left(\begin{array}{c} 1 \\ 1 \end{array} \right),
\]
and all second order derivatives are zero except for $D_x^2 F(z;\mu) = -2\pi \varepsilon \sin(2\pi x)$, so we can choose
\[
c_{F,1,z} = 1+\lambda+e^{2\pi\rho}, \quad c_{F,1,\mu} = 1 + \epsilon_M, \quad c_{F,2} = 2\pi e^{2\pi\rho},
\]
where $\epsilon_M$ is the smallest number such that $1<1 + \epsilon_M$ in the precision used.

\subsection{Finding an Approximation \texorpdfstring{$K_p$}{Kp}} \label{sec:approxK}

Iterative application of the map $F_{\mu}$ on a point in the vicinity of the attracting circle yields an orbit we can use as an approximation for the circle. However this gives us an approximation of $K$, not $K_p$ which is needed to approximate the Fourier coefficients. To deal with this we will make use of the following quasi-periodic skew product system $Q_{\omega}: \RR^2 \times \TT \to \RR^2 \times \TT$ given by
\[
Q_{\omega} \begin{pmatrix} x \\ y \\ \theta \end{pmatrix} = \begin{pmatrix} x - \omega + \varphi(x + \theta, y) \\ \varphi(x + \theta, y) \\ \theta + \omega \end{pmatrix}.
\]
Then if we introduce the map $\Psi:\RR^2 \times \TT \to \TT \times \RR$ given by
\[
\Psi\begin{pmatrix} x \\ y \\ \theta \end{pmatrix} = \begin{pmatrix} \theta \\ 0 \end{pmatrix} + \begin{pmatrix} x \\ y \end{pmatrix}
\]
we see that $\Psi$ is a semiconjugacy between $F_a$ and $Q_{\omega}$ since
\[
F_a \circ \Psi  \begin{pmatrix} x \\ y \\ \theta \end{pmatrix} =  \begin{pmatrix} \theta + \omega \\ 0 \end{pmatrix} + \begin{pmatrix} x - \omega + \varphi(x + \theta, y) \\ \varphi(x + \theta, y) \end{pmatrix} = \Psi \circ Q_{\omega} \begin{pmatrix} x \\ y \\ \theta \end{pmatrix}.
\]
The point here is that if we have a parameterization $K_Q$ of a tori such that
\[
K_Q(\theta) = \begin{pmatrix} K_p^x(\theta) \\ K_p^y(\theta) \\ \theta \end{pmatrix}, \quad \theta \in \TT,
\]
then
\[
K(\theta) := \Psi \circ K_Q(\theta) = \begin{pmatrix} \theta \\ 0 \end{pmatrix} + \begin{pmatrix} K_p^x(\theta) \\ K_p^y(\theta) \end{pmatrix} = \begin{pmatrix} \theta \\ 0 \end{pmatrix} + K_p(\theta), \quad \theta \in \TT,
\]
satisfies
\[
|F \circ K (\theta) - K(\theta + \omega)| = |Q_{\omega} \circ K_Q(\theta) - K_Q(\theta+\omega)|, \quad \theta \in \TT.
\]

So in order to approximate the Fourier coefficients we take an orbit under $Q_{\omega}$ of size $N_O$ (starting very close to the attractor) ,
\[
\begin{pmatrix} x_{i} \\ y_{i} \\ \theta_{i} \end{pmatrix} = Q_{\omega} \begin{pmatrix} x_{i-1} \\ y_{i-1} \\ \theta_{i-1} \end{pmatrix}, \quad i = 1,\hdots,N_O-1.
\]
and disregard the angle component when storing the values
\[
\quad z_i = \begin{pmatrix} x_i \\ y_i \end{pmatrix}, \quad i = 0,\hdots,N_O-1.
\]
We then follow the outline in Section \ref{sec:WB} for how to approximate Fourier coefficients using weighted Birkhoff sums. So choose $K_p(\theta) = \sum_{|k|<N_A} K_k e^{2\pi\im k\theta}$, where $N_A > 0$ is the number of frequencies we want to use for the approximation, and the Fourier coefficients are given by
\[
K_k = \sum_{i=0}^{N_O-1} w_i z_i e^{-2\pi\im ki \omega}, \quad |k| < N_A,
\]
where we use the $C^{\infty}$ bump function
\[
w(t) = \begin{cases}
\exp\left( \frac{1}{t(t-1)} \right) & \text{if } t \in (0,1), \\
0 & \text{if } t \notin (0,1).
\end{cases}
\]

\subsection{Calculating \texorpdfstring{$c_N$}{cN}, \texorpdfstring{$c_L$}{cL}, \texorpdfstring{$c_P$}{cP}, \texorpdfstring{$\hat{c}_N$}{cN} and \texorpdfstring{$\hat{c}_L$}{cL}} \label{sec:cNLP}

Let $P$ be the adapted frame given by $P(\theta) = (DK(\theta), N^0(\theta))$, where
\[
N^0(\theta) = \left(\begin{array}{cc} 0 & -1 \\ 1 & 0 \end{array} \right) DK(\theta) \left(DK(\theta)^T DK(\theta)\right)^{-1}.
\]
Here we note that
\[
P(\theta) = \left( \begin{array}{cc} DK_x(\theta) & N_x^0(\theta) \\ DK_y(\theta) & N_y^0(\theta) \end{array} \right) \quad \text{and} \quad P(\theta)^{-1} = \left( \begin{array}{cc} N_y^0(\theta) & -N_x^0(\theta) \\ -DK_y(\theta) & DK_x(\theta) \end{array} \right)
\]
since $\det{P(\theta)} = 1$ for all $\theta \in \TT_{\CC}$.

Now to get the constants we simply estimate a bound for the range $|DK(\TT_{\rho})| \subset [\underline{r},\overline{r}]$ using the methods in Section \ref{sec:ran}. Now  we can take $c_L = \overline{r}$ and $c_N = \underline{r}^{-1}$ since
\[
|N^0(\theta)| < |N^0(\theta)|_2 = |DK(\theta)|_2^{-1} < |DK(\theta)|^{-1}
\]
where $|z|_2 = \sqrt{|x|^2+|y|^2}$, $z=(x,y)^T \in \CC^2$. $\hat{c}_N$ and $\hat{c}_L$ are calculated in the same way by enclosing the range of $|DK(\TT_{\hat{\rho}})|$. For $c_P$ we simply note that
\[
|P(\theta)^{-1}| = \max(|N^0(\theta)|_1,|DK(\theta)|_1) \leq 2 \max(|N^0(\theta)|,|DK(\theta)|).
\]
so we can take $c_P = 2\max(c_L,c_N)$.

\subsection{Calculating \texorpdfstring{$c_H$}{cH}} \label{sec:cH}

We begin by noting that we can write $h$ as
\[
h(\theta) = (\varepsilon\cos(2\pi K_x(\theta))(DK_x(\theta+\omega) - DK_y(\theta+\omega)) - DK_y(\theta+\omega) )N^0_x(\theta)
\]
and see that we can directly evaluate $h$ at grid points $\{\theta_j\}$ using interval arithmetic. Then we can use Corollary \ref{cor:DFTFS} to bound $\|h\|_{\rho}$, however we first need a bound on $\|h\|_{\hat{\rho}}$. Using the above formula we see that
\[
\|h\|_{\hat{\rho}} \leq (1 + 2\varepsilon + 2\lambda)\hat{c}_N\hat{c}_L
\]
so
\[
\|h\|_{\rho} \leq \|\tilde{h}\|_{F,\rho} + C_{N_F}(\rho,\hat{\rho})((1 + 2\varepsilon + 2\lambda)\hat{c}_N\hat{c}_L).
\]

\begin{lem}
If $H(\theta) = \lambda  + h(\theta)$, $\theta \in \TT_{\rho}$, and $\|h\|_{\rho} < 1 - \lambda$, then the operator $\mathcal{R}_H$ can be bounded by
\[
\| \mathcal{R}_H\|_{\rho} \leq \frac{1}{1-\lambda-\|h\|_{\rho}}.
\]
\end{lem}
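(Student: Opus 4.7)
The plan is to realize $\mathcal{R}_H$ as (a shift composed with) $(\mathcal{H}-\Id)^{-1}$ and bound the latter by a Neumann series argument, using that $H$ has sup-norm strictly less than $1$ under the hypothesis.

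First I would recall from the discussion immediately preceding the lemma that the unique solution of the cohomological equation $H(\theta)\xi(\theta)-\xi(\theta+\omega)=\eta(\theta)$ is given by $\mathcal{R}_H\eta(\theta) = (\mathcal{H}-\Id)^{-1}\eta(\theta-\omega)$, where $\mathcal{H}$ is the transfer operator on $B(\bar{\TT}_\rho,\CC)$ (here $n=1$) defined by $\mathcal{H}\vartheta(\theta)=H(\theta-\omega)\vartheta(\theta-\omega)$. Since shifting the argument by $\omega$ leaves the sup norm on $\bar{\TT}_\rho$ unchanged, one has
\[
\|\mathcal{R}_H\|_{\rho} \;\leq\; \|(\mathcal{H}-\Id)^{-1}\|_{B(\bar{\TT}_\rho,\CC)\to B(\bar{\TT}_\rho,\CC)}.
\]

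Next I would estimate the operator norm of $\mathcal{H}$ itself. For any $\vartheta\in B(\bar{\TT}_\rho,\CC)$,
\[
\|\mathcal{H}\vartheta\|_{\rho} \;=\; \sup_{\theta\in\bar{\TT}_\rho}|H(\theta-\omega)\vartheta(\theta-\omega)| \;\leq\; \|H\|_{\rho}\|\vartheta\|_{\rho},
\]
and since $H=\lambda+h$ with $\lambda\in(0,1)$ real and positive, $\|H\|_{\rho}\leq \lambda+\|h\|_{\rho}$. The hypothesis $\|h\|_{\rho}<1-\lambda$ then gives $\|\mathcal{H}\|\leq \lambda+\|h\|_{\rho}<1$.

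With the contraction in hand, the Neumann series $(\Id-\mathcal{H})^{-1}=\sum_{n\geq 0}\mathcal{H}^n$ converges in operator norm, yielding
\[
\|(\mathcal{H}-\Id)^{-1}\| \;=\; \|(\Id-\mathcal{H})^{-1}\| \;\leq\; \frac{1}{1-\|\mathcal{H}\|} \;\leq\; \frac{1}{1-\lambda-\|h\|_{\rho}}.
\]
Chaining this with the first inequality gives the claimed bound on $\|\mathcal{R}_H\|_{\rho}$. There is no real obstacle: the whole argument is a standard Neumann series, and the only slightly delicate points are checking that the shift in the definition of $\mathcal{R}_H$ is isometric on $B(\bar{\TT}_\rho,\CC)$ and that $\lambda>0$ lets us replace $\|\lambda+h\|_\rho$ by $\lambda+\|h\|_\rho$ directly.
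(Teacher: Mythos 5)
Your proof is correct, but it follows a different route from the paper. You bound the transfer operator directly: $\|\mathcal{H}\|\leq\|H\|_{\rho}\leq\lambda+\|h\|_{\rho}<1$, then invert $\Id-\mathcal{H}$ by a Neumann series and carry the bound back through the (isometric, since $\omega\in\RR$) shift in $\mathcal{R}_H\eta(\theta)=(\mathcal{H}-\Id)^{-1}\eta(\theta-\omega)$. The paper instead compares $\mathcal{R}_H$ with the constant-coefficient solution operator $\mathcal{R}_{\lambda}$ (the same operator it reuses in Section 4.5): rewriting the cohomological equation as $\lambda\xi(\theta)-\xi(\theta+\omega)=\eta(\theta)-h(\theta)\xi(\theta)$ gives $\mathcal{R}_H\eta=\mathcal{R}_{\lambda}(\eta-h\,\mathcal{R}_H\eta)$, and then a scalar inequality in $\|\mathcal{R}_H\|_{\rho}$ is solved using $\|\mathcal{R}_{\lambda}\|_{\rho}=(1-\lambda)^{-1}$. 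Both exploit the same smallness condition $\lambda+\|h\|_{\rho}<1$ and yield the identical constant, so the difference is mainly structural. What your version buys: it is fully self-contained and simultaneously proves that $\mathcal{H}-\Id$ is invertible on $B(\bar{\TT}_{\rho},\CC)$, i.e.\ the hyperbolicity required in H4, whereas the paper's argument as written presupposes that the cohomological equation is solvable (that $\mathcal{R}_H\eta$ exists and is bounded) before deriving the estimate — to get existence there one would upgrade the identity $\mathcal{R}_H\eta=\mathcal{R}_{\lambda}(\eta-h\,\mathcal{R}_H\eta)$ to a contraction/fixed-point argument, which is essentially your Neumann series in disguise. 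What the paper's version buys is the explicit link to $\mathcal{R}_{\lambda}$, which is then exploited again when bounding $\mathcal{R}_H-\mathcal{R}_{\lambda}$ for the constant $c_D$. One micro-remark: positivity of $\lambda$ is not actually needed for the step $\|\lambda+h\|_{\rho}\leq\lambda+\|h\|_{\rho}$ — the triangle inequality gives this for any real $\lambda$ with $|\lambda|$ in place of $\lambda$; it is only used to identify $|\lambda|=\lambda$.
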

\begin{proof}
For analytical functions $\xi,\eta:\TT_{\rho}\to\CC$ solving the cohomological equation
\[
H(\theta)\xi(\theta) - \xi(\theta+\omega) = \eta(\theta), \quad \theta \in \TT_{\rho},
\]
we have that $\mathcal{R}_H\eta = \xi$. Now consider the cohomological equation given by
\[
\lambda\xi^*(\theta) - \xi^*(\theta+\omega) = \eta(\theta), \quad \theta \in \TT_{\rho},
\]
and denote the solution as $\mathcal{R}_{\lambda} \eta = \xi^*$. Then if we write the first equation as
\[
\lambda \xi(\theta) - \xi(\theta+\omega) = \eta(\theta) - h(\theta)\xi(\theta)
\]
it should be clear that
\[
\mathcal{R}_H\eta = \mathcal{R}_{\lambda}\left( \eta - h \mathcal{R}_H \eta \right)
\]
which gives us
\[
\|\mathcal{R}_H\|_{\rho} \leq \|\mathcal{R}_{\lambda}\|_{\rho} \left( 1 - \|h\|_{\rho} \|\mathcal{R}_H\|_{\rho}\right).
\]
Now using the fact that $\|\mathcal{R}_{\lambda}\|_{\rho} = (1-\lambda)^{-1}$ we can solve this inequality for the desired bound.
\end{proof}
Combining this lemma with the bound for $\|h\|_{\rho}$ gives us the constant
\[
c_H = \frac{1}{1 - \lambda - \|\tilde{h}\|_{F,\rho} - C_{N_F}(\rho,\hat{\rho})((1 + 2\varepsilon + 2\lambda)\hat{c}_N\hat{c}_L)},
\]
if $1 - \lambda > \|\tilde{h}\|_{F,\rho} + C_{N_F}(\rho,\hat{\rho})((1 + 2\varepsilon + 2\lambda)\hat{c}_N\hat{c}_L)$

\subsection{Calculating \texorpdfstring{$c_D$}{cD}} \label{sec:cD}

First we see that
\[
\langle B^L - T\mathcal{R}_H B^N \rangle = \langle B^L - T\mathcal{R}_{\lambda} B^N \rangle - \langle T e_{\mathcal{R}}B^N \rangle
\]
where $e_{\mathcal{R}} = \mathcal{R}_H - \mathcal{R}_{\lambda}$, so
\[
|\langle B^L - T\mathcal{R}_H B^N \rangle|^{-1} \leq (|\langle B^L - T\mathcal{R}_{\lambda} B^N \rangle| - \|T\|_{\rho} \|e_{\mathcal{R}}\|_{\rho}\|B^N\|_{\rho})^{-1}, \quad \rho > 0.
\]
\begin{rem}
Since $\langle \cdot \rangle$ is the same for all $\rho$ we don't need to use the same $\rho$ here as in the previous estimates, but we will do so anyway since we can reuse bounds already calculated.
\end{rem}
We note that $T$, $B^L$ and $B^N$ can be given explicitly by
\[
T(\theta) = (\varepsilon \cos(2\pi K_x(\theta))(N_y^0(\theta+\omega) - N_x^0(\theta+\omega)) + N_y^0(\theta+\omega))N_x^0(\theta)
\]
\[
+ \lambda(N_y^0(\theta+\omega) - N_x^0(\theta+\omega)) N_y^0(\theta),
\]
\[
B^L(\theta) = N_y^0(\theta+\omega) - N_x^0(\theta+\omega) \quad \text{and} \quad B^N(\theta) = DK_x(\theta+\omega) - DK_y(\theta+\omega).
\]
We see here that $B^N$ has Fourier coefficients $B_k^N = 2\pi\im((K_k)_x-(K_k)_y)e^{2\pi\im k\omega}$ and we calculate $\mathcal{R}_{\lambda} B^N$ through
\[
\mathcal{R}_{\lambda} B^N(\theta) = \sum_{|k| < N_A} \frac{B_k^N}{\lambda - e^{2\pi\im k\omega}} e^{2\pi\im k\theta}.
\]
It is now clear that $T$, $B^L$ and $\mathcal{R}_{\lambda} B^N$ can be evaluated on the grid $\{\theta_j\}$ since we have the Fourier series for $DK$ and $\mathcal{R}_{\lambda} B^N$ and $N^0$ is given as a formula of $DK$.
So let $\Phi = B^L - T\mathcal{R}_{\lambda} B^N$ and let $\tilde{\Phi}$ be the Fourier approximation obtained from $\{\Phi(\theta_j)\}$. From Corollary \ref{cor:DFTFC} we get that
\[
|\langle B^L - T\mathcal{R}_{\lambda} B^N \rangle| \geq |\langle\tilde{\Phi}\rangle| - s_{N_F}^*(0,\rho)(\|B^L\|_{\rho} + \|T\|_{\rho} \|\mathcal{R}_{\lambda} B^N\|_{\rho}).
\]
since $\Phi_k = \langle \Phi \rangle$ for $k=0$, and we note that $\langle\tilde{\Phi}\rangle = N_F^{-1}\sum_j \Phi_j$. To estimate the norms we see that
\[
\|T\|_{\rho} \leq \|\tilde{T}\|_{F,\rho} + (1 + 2\varepsilon + 2\lambda) \hat{c}_N^2C_{N_F}(\rho,\hat{\rho}),
\]
\[
\|B^L\|_{\rho} \leq \|\tilde{B}^L\|_{F,\rho} + 2\hat{c}_NC_{N_F}(\rho,\hat{\rho}),
\]
\[
\|B^N\|_{\rho} \leq \|B^N\|_{F,\rho} \quad \text{and} \quad \|\mathcal{R}_{\lambda} B^N\|_{\rho} \leq \|\mathcal{R}_{\lambda} B^N\|_{F,\rho}
\]
\begin{lem}
If $H = \lambda + h$ and $\|h\|_{\rho} < 1 - \lambda$, then
\[
\|\mathcal{R}_H - \mathcal{R}_{\lambda}\|_{\rho} \leq \frac{\|h\|_{\rho}}{(1-\lambda)(1-\lambda-\|h\|_{\rho})}.
\]
\end{lem}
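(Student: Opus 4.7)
The plan is to reuse the algebraic identity that was already derived in the proof of the previous lemma. Recall that, starting from the cohomological equation $H\xi - \xi(\cdot+\omega) = \eta$ and rewriting the left-hand side as $\lambda\xi - \xi(\cdot+\omega) = \eta - h\xi$, one obtains
\[
\mathcal{R}_H \eta = \mathcal{R}_\lambda\bigl(\eta - h\,\mathcal{R}_H \eta\bigr).
\]
This is precisely the step that was used before, so no new manipulation of the cohomological equation is needed.

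The next step is to isolate the difference of the two resolvents by subtracting $\mathcal{R}_\lambda \eta$ from both sides, giving
\[
(\mathcal{R}_H - \mathcal{R}_\lambda)\eta = -\mathcal{R}_\lambda\bigl(h\,\mathcal{R}_H \eta\bigr).
\]
Then I would take $\rho$-norms and apply submultiplicativity. The three ingredients needed are already available: the identity $\|\mathcal{R}_\lambda\|_\rho = (1-\lambda)^{-1}$ used in the previous proof, the bound $\|\mathcal{R}_H\|_\rho \leq (1-\lambda-\|h\|_\rho)^{-1}$ from the previous lemma (which is applicable precisely because the hypothesis $\|h\|_\rho < 1-\lambda$ matches), and the trivial multiplicative bound $\|h\,\mathcal{R}_H \eta\|_\rho \leq \|h\|_\rho \|\mathcal{R}_H \eta\|_\rho$.

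Putting these together yields $\|\mathcal{R}_H - \mathcal{R}_\lambda\|_\rho \leq \|\mathcal{R}_\lambda\|_\rho \|h\|_\rho \|\mathcal{R}_H\|_\rho$, which equals the claimed right-hand side. There is no real obstacle in this proof; the only point worth checking is that the previous lemma's hypothesis is satisfied, which it is by assumption, so the invocation of $\|\mathcal{R}_H\|_\rho \leq (1-\lambda-\|h\|_\rho)^{-1}$ is legitimate.
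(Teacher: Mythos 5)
Your proof is correct and is essentially the paper's argument in operator form: subtracting the two cohomological equations, as the paper does, is exactly the statement $(\mathcal{R}_H-\mathcal{R}_{\lambda})\eta=-\mathcal{R}_{\lambda}(h\,\mathcal{R}_H\eta)$ that you reuse from the previous lemma, and both then combine $\|\mathcal{R}_{\lambda}\|_{\rho}=(1-\lambda)^{-1}$ with $\|\mathcal{R}_H\|_{\rho}\leq(1-\lambda-\|h\|_{\rho})^{-1}$. No gap; the invocation of the previous lemma is legitimate under the shared hypothesis $\|h\|_{\rho}<1-\lambda$.
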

\begin{proof}
If $\mathcal{R}_H\eta = \xi$ and $\mathcal{R}_{\lambda}\eta = \xi^*$ then by subtracting the corresponding cohomological equations we see that
\[
\xi(\theta+\omega) - \xi^*(\theta+\omega) = \lambda(\xi(\theta)-\xi^*(\theta)) + h(\theta)\xi(\theta)
\]
which gives us
\[
\|\mathcal{R}_H - \mathcal{R}_{\lambda}\|_{\rho} \leq \lambda\|\mathcal{R}_H - \mathcal{R}_{\lambda}\|_{\rho} + \|h\|_{\rho} \|\mathcal{R}_H\|_{\rho}.
\]
Using that $\|\mathcal{R}_H\|_{\rho} \leq (1 - \lambda - \|h\|_{\rho})^{-1}$ we see that the above inequality can be solved for $\|\mathcal{R}_H - \mathcal{R}_{\lambda}\|_{\rho}$.
\end{proof}
Combining all of this we get $c_D = (r_2^* - r_3^*)^{-1}$ where
\begin{align*}
r_1^* &= \|\tilde{T}\|_{F,\rho} + (1 + 2\varepsilon + 2\lambda) \hat{c}_N^2C_{N_F}(\rho,\hat{\rho}), \\
r_2^* &=  |\langle\tilde{\Phi}\rangle| - s_{N_F}^*(0,\rho)(\|\tilde{B}^L\|_{F,\rho} + 2\hat{c}_NC_{N_F}(\rho,\hat{\rho}) + r_1^* \|\mathcal{R}_{\lambda} B^N\|_{F,\rho}), \\
c_h &= \|\tilde{h}\|_{F,\rho} + C_{N_F}(\rho,\hat{\rho})((1 + 2\varepsilon + 2\lambda)\hat{c}_N\hat{c}_L),\\
r_3^* &= r_1^* \frac{c_h}{(1-\lambda)(1-\lambda-c_h)} \|B^N\|_{F,\rho}.
\end{align*}

\subsection{Calculating \texorpdfstring{$c_E$}{cE}} \label{sec:cE}

Let $G(\theta) = F(K(\theta),\mu)$ and $K_{\omega}(\theta) = K(\theta+\omega)$ so that $E(\theta) = G(\theta) - K_{\omega}(\theta)$, $\theta \in \TT_{\hat{\rho}}$. Here we should note that $G(\theta)$ and $K_{\omega}(\theta)$ are values on a torus so subtraction between them is technically not well defined and we should instead understand $E$ as given by $E(\theta) = G_p(\theta) - K_{p,\omega}(\theta)$ where
\[
G(\theta) = \begin{pmatrix} \theta \\ 0 \end{pmatrix} + G_p(\theta) \quad \text{and} \quad K_{\omega}(\theta) = \begin{pmatrix} \theta \\ 0 \end{pmatrix} + K_{p,\omega}(\theta).
\]
Note that we can calculate $G_p(\theta)$ from $K_p(\theta)$ using the map $Q_{\omega}$ from Section \ref{sec:approxK}. Then we have
\[
\|E\|_{\rho} \leq \|\tilde{G}_p - K_{p,\omega}\|_{\rho} + \|G_p - \tilde{G}_p\|_{\rho} \leq  \|\tilde{G}_p - K_{p,\omega}\|_{F,\rho} + C_{N_F}(\rho,\hat{\rho}) \|G_p\|_{\hat{\rho}}.
\]
To get a bound on $\|G_p\|_{\hat{\rho}}$ we see that
\[
\|G_p\|_{\hat{\rho}} \leq \sup_{\theta\in\TT_{\hat{\rho}}} |K_p^x(\theta)| + \lambda|K_p^y(\theta)| + \mu + \frac{\varepsilon}{2\pi} |\sin(2\pi (\theta + K_p^x(\theta)))|
\]
and if we calculate $\bar{\rho}$ such that $K_p^x(\TT_{\hat{\rho}}) \subset \TT_{\bar{\rho}}$, by estimating the range of $K_p$ as explained in Section \ref{sec:ran}, we get
\[
\|G_p\|_{\hat{\rho}} \leq (1 + \lambda)\hat{c}_L + \mu + \frac{\varepsilon}{2\pi} e^{2\pi( \hat{\rho} + \bar{\rho})}.
\]
Hence we take
\[
c_E =  \|\tilde{G}_p - K_{p,\omega}\|_{F,\rho} + C_{N_F}(\rho,\hat{\rho})\left((1 + \lambda)\hat{c}_L + \mu + \frac{\varepsilon}{2\pi} e^{2\pi(\hat{\rho} + \bar{\rho})}\right).
\]

\subsection{The Algorithm}

Here we sum up the results of this Section and describe an algorithm that can check the condition for existence in Theorem \ref{thm:kam} applied to the dissipative standard map in \eqref{eq:map}. The algorithm takes as input the parameters $\lambda$, $\mu$ and $\varepsilon$, and an enclosure of the rotation vector $\omega$ along with it's Diophantine constants $\gamma$ and $\tau$.

The constants $c_{F,1z}$, $c_{F,1,\mu}$ and $c_{F,2}$ are given in Section \ref{sec:cF} and can be calculated with only the parameters. The constant $c_R$ is calculated using $\gamma$ and $\tau$ as described in Section \ref{sec:dio}. In order to calculated the rest of the constants we must first get the Fourier coefficients $\{K_k\}_{|k|<N_A}$ of an approximate invariant torus as described in Section \ref{sec:approxK}.

Now it can be very nonobvious how to choose $N_F$ along with $\delta$, $\rho$, $\hat{\rho}$ and $\rho_{\infty}$ in order to verify the condition for existence. Choosing $N_F$ too large can result in unpractical computation times and the other parameters usually have to be balanced so that neither $\hat{C}^*$ becomes too large nor $\rho^{2\tau}$ too small. One possible strategy which we will use is to simply test your way through a list of values. So we let $N_F$ be taken from an ordered set $\mathcal{N}_F$ and $(\delta,\rho,\hat{\rho},\rho_{\infty})$ from $\mathcal{P}_S$. Here we note that we don't need to recalculate everything when changing values for $(\delta,\rho,\hat{\rho},\rho_{\infty})$, so we need to determine what can be calculated without these values.

To calculate $c_N$, $\hat{c}_N$, $c_L$, $\sigma_L$, $\hat{c}_L$ and $c_P$ we see from Section \ref{sec:cNLP} we only need $\{K_k\} = \{K_k\}_{-\frac{N_F}{2}\leq k < \frac{N_F}{2}}$ in addition to $\rho$ and $\hat{\rho}$ so no preparation can be done here. For $c_H$ and $\sigma_H$ we see in Section \ref{sec:cH} that these are calculated from $\{\tilde{h}_k\}$ and $\rho$ so here it makes sense to compute $\{\tilde{h}_k\}$ beforehand and save between changing $\rho$. For $c_D$ and $\sigma_D$ we saw Section \ref{sec:cD} that we need $\{\tilde{h}_k\}$, $\{\tilde{T}_k\}$, $\{\tilde{B}_k^L\}$, $\{B_k^N\}$, $\{(\mathcal{R}_{\lambda}B^N)_k\}$ and $\{\tilde{\Phi}_k\}$ in addition to $\rho$ so all those Fourier coefficients can be saved between changing $\rho$. With these constants we can calculate $\hat{C}^*$ as described in Appendix \ref{sec:const}. To calculate $c_E$ we need the Fourier coefficients for $\tilde{G}_p - K_{p,\omega}$, which we will denote as $\{\tilde{G}_k-K_{k,\omega}\}$, in addition to $\rho$ and $\hat{\rho}$. In Algorithm 1 we write the whole algorithm in pseudocode.

\begin{algorithm}[H]
\SetKw{KwReturn}{return}
\SetKw{KwTrue}{True}
\SetKw{KwFalse}{False}
\KwData{Parameters $\lambda$, $\mu$, $\varepsilon$ and $\omega$ along with $\gamma$ and $\tau$}
\KwResult{\KwTrue if the program can prove the existence of an invariant torus and \KwFalse if it cannot }
Calculate constants $c_{F,1z}$, $c_{F,1,\mu}$, $c_{F,2}$ and $c_R$\;
Approximate Fourier coefficients $\{K_k\}_{|k|<N_A}$\;
\For{$N_F \in \mathcal{N}_{F}$}{
	Calculate enclosures for $\{\tilde{h}_k\}$, $\{\tilde{T}_k\}$, $\{\tilde{B}_k^L\}$, $\{B_k^N\}$, $\{(\mathcal{R}_{\lambda}B^N)_k\}$, $\{\tilde{\Phi}_k\}$ and $\{\tilde{G}_k-K_{\omega,k}\}$ for $-\frac{N_F}{2} \leq k < \frac{N_F}{2}$\;
	\For{$(\delta,\rho,\hat{\rho},\rho_{\infty}) \in \mathcal{P}_S$}{
		Calculate $c_N$, $\hat{c}_N$, $c_L$, $\sigma_L$, $\hat{c}_L$, $c_P$, $\sigma_P$, $c_H$, $\sigma_H$, $c_D$, $\sigma_D$, $\hat{C}_*$, $c_E$\;
		\If{$\hat{C}_{*} c_E < \gamma^2\rho^{2\tau}$}{
			\KwReturn \KwTrue\;
		}
	}
}
\KwReturn \KwFalse\;
\caption{Proving the existence of an invariant tori}
\end{algorithm}

\begin{rem}
If the algorithm returns \textbf{True} we can of course also calculate $\hat{C}_{**}$ and $\hat{C}_{***}$ and to get an estimate on how close the invariant torus is and if there is local uniqueness, if those things are of interest.
\end{rem}

\section{Numerical Results} \label{sec:results}

In order to test the practicality of our approach we implement Algorithm 1 on a computer. The implementation is done in C++ with multiple precision floating point numbers and intervals provided by the MPFR and MPFI libraries respectively. We use a custom FFT based on the implementation found in \cite{PresTeukVettFlan92}.

Care must taken when choosing parameters, both for the map and for the algorithm. For rotation number we choose the golden mean $\omega = \frac{\sqrt{5}-1}{2}$ because it has very strong Diophantine properties and from \cite{Figueras_Haro_Luque_16} we get that we can take $\tau = 1.26$ and $\gamma = 0.381966011250104$. We fix $\lambda = 0.4$ (this was the value considered in \cite{Calleja_Figueras_12}), then for any value of $\varepsilon$ we can estimate $\mu$ using the methods described in \cite{Linroth_19}. We take orbit size $N_O = 65296$ and $N_A = 240$ number of Fourier modes. We choose $N_{F}$ for an increasing list of powers of $2$ and $\rho$ is chosen from a decreasing list of floating point numbers, we fix $\hat{\rho} \approx 10^{-2}$ and let $\rho_{\infty} = \delta = \rho/4$. The results of the algorithm can be seen in Table \ref{tab:1}, where we recall that Theorem \ref{thm:kam} asserts existence of an invariant circle if $\frac{\hat{C}_{*}\|E\|_{\rho}}{\gamma^2 \rho^{2\tau}} < 1$ and $\frac{\hat{C}_{**}\|E\|_{\rho}}{\gamma \rho^{\tau}}$ is a bound on the distance between the approximation and the invariant circle. Note here that we are not looking to minimize any of the constants but simply to prove existence of an invariant circle.

\renewcommand{\arraystretch}{1.2}
\begin{table}[h!]
\begin{tabular}{ p{1cm} p{2cm} p{2cm} p{2cm} p{2cm} } 
  $\varepsilon$ & $\rho$ & $N_F$ & $\frac{\hat{C}_{*}\|E\|_{\rho}}{\gamma^2 \rho^{2\tau}}$ & $\frac{\hat{C}_{**}\|E\|_{\rho}}{\gamma \rho^{\tau}}$ \\[5pt]
  \hline
  0.1 & $1.95 \cdot 10^{-3}$ & $2048$ & $2.75 \cdot 10^{-3}$ & $9.74 \cdot 10^{-13}$ \\
  0.2 & $1.95 \cdot 10^{-3}$ & $2048$ & $6.82 \cdot 10^{-3}$ & $1.67 \cdot 10^{-12}$ \\
  0.3 & $1.95 \cdot 10^{-3}$ & $2048$ & $2.00 \cdot 10^{-2}$ & $3.14 \cdot 10^{-12}$ \\
  0.4 & $1.95 \cdot 10^{-3}$ & $2048$ & $7.29 \cdot 10^{-2}$ & $6.70 \cdot 10^{-12}$ \\
  0.5 & $1.95 \cdot 10^{-3}$ & $2048$ & $4.17 \cdot 10^{-1}$ & $1.83 \cdot 10^{-11}$ \\
  0.6 & $9.77 \cdot 10^{-4}$ & $2048$ & $3.74 \cdot 10^{-2}$ & $1.83 \cdot 10^{-13}$ \\
  0.7 & $3.9 \cdot 10^{-3}$ & $4096$ & $1.72 \cdot 10^{-7}$ & $3.21 \cdot 10^{-15}$ \\
  \hline
\end{tabular}
\caption{Results of Algorithm 1 proving existence of invariant circle with golden mean rotation for different values of $\varepsilon$.}
\label{tab:1}
\end{table}

\bibliography{bibliography}{}
\bibliographystyle{alpha}

\newpage
\appendix

\section{Computing the Constants for the KAM-like Theorem} \label{sec:const}

If we have upper bounds 
\[
    c_L \geq \|DK\|_{\rho}, \hspace{3mm} c_P \geq \|P^{-1}\|_{\rho}, \hspace{3mm} c_H \geq \|\mathcal{R}_H\|_{\rho},
\]
\[
    c_D \geq |\langle B^L - T \mathcal{R}_{H}(B^N) \rangle^{-1}|, \hspace{3mm} c_B \leq \mathrm{dist}(K(\TT_{\rho}^d),\partial \mathcal{B})
\]
such that $c_L < \sigma_L$, $c_P < \sigma_P$, $c_H < \sigma_H$ and $c_D < \sigma_D$, as well as lower bounds $c_U \leq \mathrm{dist}(a,\partial \mathcal{U})$ we can compute the constants $\hat{C}_{*}$, $\hat{C}_{**}$ and $\hat{C}_{***}$ in Theorem \ref{thm:kam} with the following formulas
\begin{align*}
C_1 &= 1 + \sigma_H \sigma_P c_N c_{F,1,z}, \\
C_2 &= 1 + \sigma_P \sigma_D c_{F,1,a} C_1, \\
C_3 &= C_2((\sigma_L + 1) c_R \sigma_P C_1 + c_N \sigma_H \sigma_P \gamma \delta^{\tau}), \\
\hat{C}_2 &= \sigma_L C_3 + c_N \sigma_H \sigma_P C_2 \gamma \delta^{\tau}, \\
\hat{C}_3 &= \sigma_D \sigma_P C_1, \\
\hat{C}_{2,3} &= \max(\hat{C}_2,\hat{C}_3 \gamma \delta^{\tau}), \\
\hat{C}_4 &= 2 \sigma_P^2 d \hat{C}_2, \\
C_4 &= c_N(\sigma_P c_{F,2} \hat{C}_{2,3} \delta + c_{F,1,z}\hat{C}_4), \\
\hat{C}_5 &= 2 \sigma_H^2 C_4, \\
C_5 &= \hat{C}_4 c_{F,1,a} + \sigma_P c_{F,2} \hat{C}_{2,3} \delta, \\
C_6 &= C_5 + \sigma_H \sigma_P c_{F,1,a} C_4 + \sigma_P c_{F,1,z} c_N \sigma_H C_5 + \sigma_P c_{F,1,z} c_N \sigma_P c_{F,1,a} \hat{C}_5, \\
\hat{C}_6 &= 2 \sigma_D^2 C_6, \\
\hat{C}_7 &= d c_R C_3 \gamma \delta^{\tau-1} + \frac{1}{2} c_{F,2} \hat{C}_{2,3}, \\
a_1 &= \frac{\rho-\rho_{\infty}}{\rho-2\delta-\rho_{\infty}}, \quad a_3 = \frac{\rho}{\delta}, \\
\hat{C}_8 &= \max\left\{ \frac{d\hat{C}_2}{\sigma_L - c_L} \frac{1}{1-a_1^{1-\tau}}, \frac{\hat{C}_4}{\sigma_P - c_P} \frac{1}{1-a_1^{1-\tau}}, \right. \\
& \frac{\hat{C}_5}{\sigma_H - c_H} \frac{1}{1-a_1^{1-\tau}}, \frac{\hat{C}_6}{\sigma_D - c_D} \frac{1}{1-a_1^{1-\tau}}, \\
& \left.  \frac{\hat{C}_2 \delta}{c_B} \frac{1}{1-a_1^{-\tau}}, \frac{\hat{C}_3 \delta^{\tau+1} \gamma}{c_U} \frac{1}{1-a_1^{-2\tau}}  \right\}, \\
\hat{C}_* &= \max\left\{ \hat{C}_7 (a_1 a_3)^{2\tau}, \hat{C}_8a_3^{\tau+1} \gamma \rho^{\tau-1} \right\}, \\
\hat{C}_{**} &= \max\left\{ \frac{\hat{C}_2 a_3^{\tau}}{1-a_1^{-\tau}}, \frac{\hat{C}_3 \gamma \rho^{\tau}}{1-a_1^{-2\tau}} \right\}, \\
\tilde{C} &= \hat{C}_{2,3} \frac{1}{2} c_{F,2}, \\
\hat{C}_{***} &= 4^{\tau} \tilde{C} \hat{C}_{**}.
\end{align*}

\end{document}